\newcommand{\N}{\mathbb{N}}
\newcommand{\Z}{\mathbb{Z}}
\newcommand{\R}{\mathbb{R}}
\renewcommand{\S}{\mathcal{S}}
\newcommand{\x}{\mathbf{x}}
\newcommand{\p}{\mathbf{p}}
\theoremstyle{plain}
\newtheorem{thm}{Theorem}[section]
\newtheorem{mainthm}{Main Theorem}
\newtheorem{prop}[thm]{Proposition}
\newtheorem{cor}[thm]{Corollary}
\newtheorem{lem}[thm]{Lemma}
\newtheorem{question}[thm]{Question}
\newtheorem*{thm*}{Theorem} 
\theoremstyle{definition}
\newtheorem{Def}[thm]{Definition}
\newtheorem{example}[thm]{Example}
\newtheorem{remark}[thm]{Remark}
\newtheorem{fact}[thm]{Fact}
\DeclareMathOperator{\pop}{p}
\begin{document}
\title[Sparse moments of step functions and allele frequency spectra]{Sparse moments of univariate step functions\\ and allele frequency spectra}

 \author{Zvi Rosen}
  \address{Florida Atlantic University,  Boca Raton, FL, USA}
\email{rosenz@fau.edu}
 \author{Georgy Scholten}
 \address{North Carolina State University, Raleigh, NC, USA }
\email{ghscholt@ncsu.edu}
 \author{Cynthia Vinzant}
 \address{North Carolina State University, Raleigh, NC, USA}
\email{clvinzan@ncsu.edu}

\begin{abstract}
We study the univariate moment problem of 
piecewise-constant density functions on the interval $[0,1]$ 
and its consequences for an inference problem in population genetics. 
We show that, up to closure, any collection of $n$ moments is achieved by a step function with at most $n-1$ breakpoints and that this bound is tight. 
We use this to show that any point in the $n$th coalescence manifold in population genetics can be attained by a piecewise constant population history with at most $n-2$ changes.
Both the moment cones and the coalescence manifold are projected spectrahedra and we describe the problem of finding a nearest point on them as a semidefinite program. 
  \end{abstract}

\maketitle

Given a finite collection $A\subset \N$, we consider the convex cone $M(A)$ of all moments $(m_{a})_{a\in A}$ 
of the form $m_a = \int x^a d \mu$ where $\mu$ is a nonnegative Borel measure on the unit interval $[0,1]$. 
For consecutive moments $A = \{0,1,2\hdots, d\}$, this is a classical object in analysis and real algebraic geometry. The problem of determining membership in the cone $M(A)$ is known as the truncated Haussdorff moment problem. 
See, for example,  \cite{ConvexAlgBook, LasserreBook, PosPoly, Sch}.

In this paper we study moments coming from piecewise-constant density functions with the idea of minimizing the number of pieces needed. 
Formally, we consider the set $M_k(A)$ as the closure of the set of moments $(m_{a})_{a\in A}$ where $m_a = \int_0^1x^af(x)dx$ and $f$ is a 
nonnegative step function with at most $k$ discontinuities. Our main theorem is the following:

\begin{mainthm}\label{mainthm:StepBound}
$M_k(A) = M(A)$ if and only if $k\geq |A|-1$. 
\end{mainthm}
This is the content of Theorem~\ref{thm:MomentBound1} and  Corollary~\ref{cor:MomentBound2}. 
The proof involves studying the convex algebraic boundary of these cones and in particular showing that they are simplicial (Corollary~\ref{cor:simplicial}). 
When restricting to the moments of monotone density functions, 
only half as many break points are needed (see Propositions~\ref{prop:MonotoneCone} and \ref{prop:monotoneLowerBound}). 

 \begin{mainthm}\label{mainthm:Monotone}
Every $A$-moment vector of a monotone density function is the 
 limit of $A$-moments of monotone step functions with $\leq k$ breakpoints 
 if and only if  $ k \geq \lfloor |A|/2 \rfloor$.  
\end{mainthm}

One of our motivations for studying this problem came from its relation to the coalescence manifold studied by \cite{BRS}. 
The coalescence manifold $\mathcal{C}_{n,k}$, formally defined in Section~\ref{sec:coalManifold}, is a set of summary statistics in population genetics,
derived from observing $n$ genomes with a population history consisting of $k+1$ different population sizes.  
Our last main theorem, appearing as Theorem~\ref{thm:coal}, is that the 
coalescence manifold $\mathcal{C}_{n,k}$ coincides with an affine section of the moments $M_{k}(A)$ 
for $A = \{0, 2, \hdots, \binom{n}{2} -1\}$. 

\begin{mainthm}
The coalescence manifold $\mathcal{C}_{n,k}$ is the intersection of 
$M_k(A)$ with the affine hyperplane of points with coordinate sum 
equal to one for $A = \{0, 2, \hdots, \binom{n}{2}-1\}$. That is,  
$\mathcal{C}_{n,k}  = \left\{(m_a)_{a\in A}\in M_{k}(A)  :   \sum_{a\in A} m_a = 1\right\}$.
\end{mainthm}

The authors in \cite{BRS} show that the manifold $\mathcal{C}_{n,k}$ stabilizes at 
$k=2n-2$, i.e. $\mathcal{C}_{n,2n-2} = \mathcal{C}_{n,k}$ for all $k\geq 2n-2$. 
Together, the main theorems above improve this bound by a factor of two, showing that the coalescence manifolds stabilize at $k = n-2$ and this bound is tight. 

The connection with the moment problem also provides a description of $ \mathcal{C}_{n,n-2}$ as the projection of a spectrahedron. 
The problem of finding the nearest point in $ \mathcal{C}_{n,n-2}$ to a given point in $\R^{n-1}$ can then be formulated as a semidefinite program.

This paper is organized as follows. In Section~\ref{sec:momentIntro}, we introduce formal definitions 
of the moment sets $M_k(A)$, study their convex and algebraic structure, and prove Theorem~\ref{mainthm:StepBound}. 
In Section~\ref{sec:Monotone}, we analyze analogous questions for moment problems coming from monotone step functions.  
The definitions and connections with the coalescence manifold $\mathcal{C}_{n,k}$ are given in Section~\ref{sec:coalManifold}.
Semidefinite descriptions of these sets are discussed in Section~\ref{sec:sdp}. 
Finally we end with a discussion of open problems surrounding these interesting sets in Section~\ref{sec:questions}.

\subsection*{Acknowledgements}
Author Rosen thanks Dr.~Yun S.~Song for introducing him to this topic. 
Authors Scholten and Vinzant were partially supported by NSF-DMS grants \#1620014 and \#1943363. 
This material is based upon work directly supported by the National Science Foundation Grant No. DMS-1926686, and indirectly supported by the National Science Foundation Grant No. CCF-1900460.

\section{Moments of step functions}\label{sec:momentIntro}
For $k\in \N$, let $S_k$ denote the set of nonnegative step functions on $[0,1]$ of the form 
\begin{equation}\label{eq:stepFunction}
f = y_1{\bf 1}_{[0,s_1]} +\sum_{i=2}^{k+1} y_i {\bf 1}_{(s_{i-1},s_{i}]},
\end{equation}
where $0=s_0< s_1< \hdots <s_k<s_{k+1}=1$ and $y_1, \hdots, y_{k+1}\in \R_{\geq 0}$.
Note that:
\begin{enumerate} \item $S_k$ is invariant under nonnegative scaling,
  \item $S_k\subseteq S_{\ell}$ when $k \leq\ell$,
    and \item $S_{k} + S_{\ell}$, defined as $\{f + g \: \mid \: f \in S_k, g\in S_\ell\}$, is a subset of $ S_{k+\ell}$.
    \end{enumerate}

Elements of $S_k$ define nonnegative measures on $[0,1]$. We will be interested in the possible moments of these measures. 
Given a finite collection $A\subset \N$, we define to be 
the Euclidean closure of the set moments given by density functions in $S_k$: 
\[
M_k(A) \ =\ \overline{\left\{ \left(\int_0^1x^af(x)dx\right)_{a\in A} : f\in S_k\right\}}.
\]

One important case is that of consecutive moments $A = \{0,1,\hdots, d\}$. 
For any finite collection $A\subset
\N$, the moment cone $M_k(A)$ can be expressed, up to closure, as the
image of $M_k(\{0,1,\hdots, \max(A)\})$ under the coordinate projection
$\pi_A:\R^{\max(A)+1}\to \R^A$ given by $\pi_A(m_0, \hdots,
m_{\max(A)}) = (m_a)_{a\in A}$. 

\begin{remark}\label{rem:sum}
By linearity of the integral, we see that $M_k(A)$ inherits many properties of $S_k$. 
That is, $M_k(A)$ is invariant under nonnegative scaling,
$M_k(A)\subseteq M_{\ell}(A)$ when $k \leq\ell$ 
and $M_{k}(A) + M_{\ell}(A) \subseteq M_{k+\ell}(A)$ (here, in the sense of
the Minkowski sum), as desired. 
\end{remark}

We will be interested in comparing this to the full moment cone: 
\[
M(A)  \ =\ \overline{\left\{ \left(\int_0^1x^a d\mu \right)_{a\in A} :\mu \text{ is a nonnegative Borel measure on }[0,1]\right\}}.
\]
The cone $M(A)$ is dual to the convex cone of univariate polynomials 
supported on $A$ that are nonnegative on $[0,1]$, as will be discussed below in
Proposition~\ref{prop:boundaryMoments}.

When $0\in A$, the closure in the definition of $M(A)$ is not necessary, and 
the extreme rays of $M(A)$ are come from point evaluations. 
That is, we can write $M(A)$ as the conical hull of the image of $[0,1]$ under the corresponding moment map: 
\[
M(A) \ = \  {\rm conicalHull}\left\{v_A(t) : t\in [0,1]\right\}
\ \ \text{ where } \ \ 
v_A(t) = (t^a)_{a\in A}.
\]
See, for example, \cite[Prop. 10.5]{Sch}.

When $0\notin A$, this equality only holds up to closure, as the curve 
parametrized by $v_A(t)$ includes the origin. In this case, 
$M(A)= \overline{{\rm conicalHull}\left\{v_A(t) : t\in [0,1]\right\}}$. 
As we will see below, then we can still write $M(A)$ as the conical 
hull of a curve segment. Specifically, 
 $M(A)= {\rm conicalHull}\left\{v_B(t) : t\in [0,1]\right\}$ where $B = \{ a-\min(A) : a\in A\}$.

\begin{lem}\label{lem:moment_shift}
If $A\subset \N$ is finite and $B = \{ a-\min(A) : a\in A\}$, then
$M(A) = M(B)$. 
\end{lem}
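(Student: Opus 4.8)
The plan is to exhibit an explicit linear isomorphism of $\R^A$ with $\R^B$ that carries the curve $\{v_A(t):t\in[0,1]\}$ to (a nonnegative rescaling of) $\{v_B(t):t\in[0,1]\}$, and then invoke the descriptions of $M(A)$ and $M(B)$ as closures of conical hulls of these curves. Write $m=\min(A)$, so that $B=\{a-m:a\in A\}$ and $0\in B$. For $t\in(0,1]$ we have $v_A(t)=(t^a)_{a\in A}=t^m\cdot(t^{a-m})_{a\in A}=t^m\,v_B(t)$ under the obvious reindexing of coordinates by the bijection $a\mapsto a-m$ between $A$ and $B$. Since $t^m>0$ on $(0,1]$, the two curves agree as subsets of projective space on $(0,1]$, hence generate the same conical hull there; the only discrepancy is at $t=0$, where $v_A(0)$ is the zero vector (as $m\geq 1$ when $0\notin A$, and when $0\in A$ we have $m=0$ and the statement is trivial since $A=B$).

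The key steps, in order: first, reduce to the case $0\notin A$, so $m\geq 1$. Second, recall from the discussion preceding the lemma that $M(A)=\overline{{\rm conicalHull}\{v_A(t):t\in[0,1]\}}$ and likewise $M(B)={\rm conicalHull}\{v_B(t):t\in[0,1]\}$, the latter closure being unnecessary because $0\in B$ forces $v_B$ to stay on the nonnegative side and the curve is compact. Third, using $v_A(t)=t^m v_B(t)$ for $t\in(0,1]$ and the scaling-invariance of conical hulls, conclude ${\rm conicalHull}\{v_A(t):t\in(0,1]\}={\rm conicalHull}\{v_B(t):t\in(0,1]\}$ after identifying $\R^A\cong\R^B$ via $a\mapsto a-m$. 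Fourth, observe that adjoining the point $v_A(0)=\mathbf 0$ on the left does not change the conical hull, while adjoining $v_B(0)=(1,0,\hdots,0)$ on the right is the one point we must account for: but $v_B(0)=\lim_{t\to 0^+}v_B(t)$ (since the first coordinate is the constant $t^0=1$ and the rest tend to $0$), so $v_B(0)$ already lies in the closure of ${\rm conicalHull}\{v_B(t):t\in(0,1]\}$. Taking closures on both sides then yields $M(A)=M(B)$.

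Alternatively — and this may be the cleaner write-up — one can argue directly at the level of measures: if $\mu$ is a nonnegative Borel measure on $[0,1]$, then $d\nu=x^m\,d\mu$ is also such a measure, and $\int x^{a-m}\,d\nu=\int x^a\,d\mu$, giving a containment of moment sets; conversely, approximating $d\mu=x^{-m}\,d\nu$ by truncating away a neighborhood of $0$ (where $x^{-m}$ blows up) shows the reverse containment holds up to closure. This avoids any projective-geometry language.

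The main obstacle is entirely the behavior at the endpoint $t=0$: the map $v_A\mapsto v_B$ is the rescaling by $t^{-m}$, which is singular there, so one cannot claim a literal homeomorphism of the two parametrized curves and must instead check that the "missing" ray ${\rm cone}\{v_B(0)\}$ is recovered in the closure — which it is, by the limiting argument above. Everything else is a routine bookkeeping of the coordinate reindexing $a\mapsto a-\min(A)$ and the elementary fact that scaling each curve point by a positive scalar leaves the conical hull unchanged.
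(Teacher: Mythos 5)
Your primary argument is correct and essentially the same as the paper's: both rest on the identity $v_A(t)=t^{\min(A)}v_B(t)$ for $t\in(0,1]$ to equate the conical hulls over the half-open interval, and then recover the extreme ray over $v_B(0)$ in the closure --- you do this by continuity of $t\mapsto v_B(t)$ at $t=0$, while the paper exhibits the explicit step functions $\epsilon^{-(\min(A)+1)}{\bf 1}_{[0,\epsilon]}$ whose $A$-moments converge to $v_B(0)$. One caution on your sketched measure-theoretic alternative: truncating $x^{-m}\,d\nu$ away from the origin does \emph{not} recover $v_B(0)$ when $\nu$ has an atom at $0$ (for instance $\nu=\delta_0$ yields $\mu_\epsilon\equiv 0$), so that route would need the atom at the origin handled separately, which is exactly what the paper's step-function approximation accomplishes.
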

\begin{proof}
For $t\in (0,1]$, the point $v_A(t)$ can be rewritten as
$t^{\min(A)}v_B(t)$, a scalar multiple of $v_B(t)$.
It follows that the conical hulls of $\left\{v_A(t) : t\in (0,1]\right\}$
and $\left\{v_B(t) : t\in (0,1]\right\}$ are equal.
We observe that the extreme ray $v_B(0)$ of $M(B)$ can be attained in
the closure of $M(A)$ as
the limit of the moment of the step function
$f=\epsilon^{-(\min(A)+1)}{\bf 1}_{[0,\epsilon]}$ as $\epsilon$ goes
to zero:
\[\lim_{\epsilon \rightarrow 0}\int_{0}^1 x^a f(x)dx 
\ = \ \lim_{\epsilon \rightarrow 0} \epsilon^{-(\min(A)+1)}\int_{0}^{\epsilon}x^a dx
\ = \ \lim_{\epsilon \rightarrow 0} \frac{\epsilon^{a-\min(A)}}{a+1} \\
\ =  \                     \begin{cases}
                          \frac{1}{a+1} & \text{if } a= \min(A)\\
                          \ 0 & \text{otherwise.} 
                        \end{cases}
\]
It follows that $v_B(0)$ belongs to $M(A)$. Since $M(B)$ can be
written as the union of the cone over $v_A(t)$ for $t \in (0,1]$ and
the ray over $v_B(0)$, the equality between the two cones ensues: 
\[
M(A)=  \overline{{\rm conicalHull}\left\{v_A(t) : t\in [0,1]\right\}}
= M(B). \qedhere
\]
\end{proof}

\begin{example}
Consider $A = \{1,2\}$ and $B = \{0,1\}$. Then 
\[
M(B) = {\rm conicalHull}\{(1,t) : t\in [0,1]\}  = \overline{{\rm conicalHull}\{(t,t^2) : t\in [0,1]\}} = M(A).
\]
Here we see the need for taking closures when $0\not\in A$. The point $(1,0)= v_B(0)$ is not contained in the 
the conical hull of the curve segment $\{(t,t^2) : t\in [0,1]\}$ but is contained in its closure. See Figure~\ref{fig:A={1,2}}. In this case, the boundary of $M(A)= M(B)$ consists of scalar multiples of $v_B(0)$ and 
$v_B(1)$, both of which belong to $M_1(A)$, by Proposition~\ref{prop:shiftDirac=2} below.
Arguments below will then show that $M(A) = M_1(A)$. 
\end{example}

\begin{figure}
\includegraphics[height=2in]{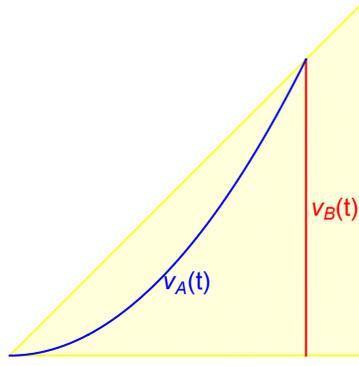}
\caption{The cone $M(A)$ along with the curve segments $v_{A}([0,1])$
  and $v_{B}([0,1])$ for $A=\{1,2\}$ and $B = \{0,1\}$.}
\label{fig:A={1,2}}
\end{figure}

\begin{prop}\label{prop:shiftDirac=2}
Let $A\subset \N$ be finite and let $B = \{ a-\min(A) : a\in A\}$. 
The points $v_B(0)$and $v_B(1)$ belong to
$M_1(A)$ and for every $t\in (0,1)$, $v_B(t)$ belongs to $M_2(A)$.
\end{prop}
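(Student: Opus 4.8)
The plan is to exhibit each $v_B(t)$ as a limit of moments of rescaled ``narrow bump'' step functions --- approximate identities concentrated near a single point of $[0,1]$. The key preliminary reduction is that for $t \in (0,1]$ one has $v_A(t) = t^{\min(A)} v_B(t)$, so $v_B(t) = t^{-\min(A)} v_A(t)$ is a positive scalar multiple of $v_A(t)$; since $M_k(A)$ is invariant under nonnegative scaling (Remark~\ref{rem:sum}) and is defined as a closure, it suffices to approximate $v_A(t)$ (for $t\in(0,1)$) and $v_B(0)$ and $v_B(1) = v_A(1)$ by moments of step functions with the required number of breakpoints.

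For $v_B(0)$ I would reuse the computation from the proof of Lemma~\ref{lem:moment_shift}. The step function
\[
f_\epsilon \ = \ (\min(A)+1)\,\epsilon^{-\min(A)-1}\,{\bf 1}_{[0,\epsilon]}
\]
has the form \eqref{eq:stepFunction} with a single breakpoint $s_1 = \epsilon$ (and $y_2 = 0$), so $f_\epsilon \in S_1$, and a direct integration shows $\int_0^1 x^a f_\epsilon(x)\,dx \to 1$ if $a = \min(A)$ and $\to 0$ otherwise as $\epsilon \to 0$; thus its moment vectors converge to $v_B(0)$ and $v_B(0) \in M_1(A)$. Symmetrically, for $v_B(1) = v_A(1)$, the step function $g_\epsilon = \epsilon^{-1}{\bf 1}_{(1-\epsilon,1]} \in S_1$ (breakpoint $s_1 = 1-\epsilon$, $y_1 = 0$) satisfies $\int_0^1 x^a g_\epsilon(x)\,dx = \epsilon^{-1}\frac{1-(1-\epsilon)^{a+1}}{a+1} \to 1$ for every $a \in A$, by a Taylor expansion of $(1-\epsilon)^{a+1}$ (or the mean value theorem for integrals), so $v_B(1) \in M_1(A)$.

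For $t \in (0,1)$, pick $\epsilon > 0$ small enough that $[t-\epsilon, t+\epsilon] \subset (0,1)$ and set $h_\epsilon = (2\epsilon)^{-1}{\bf 1}_{(t-\epsilon, t+\epsilon]}$. This is of the form \eqref{eq:stepFunction} with two breakpoints $s_1 = t-\epsilon$, $s_2 = t+\epsilon$ (and $y_1 = y_3 = 0$), so $h_\epsilon \in S_2$. By continuity of $x \mapsto x^a$ (equivalently, the mean value theorem for integrals), $\int_0^1 x^a h_\epsilon(x)\,dx = (2\epsilon)^{-1}\int_{t-\epsilon}^{t+\epsilon} x^a\,dx \to t^a$ as $\epsilon \to 0$, so the moment vectors of $h_\epsilon$ converge to $v_A(t)$, giving $v_A(t) \in M_2(A)$. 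Multiplying by the positive scalar $t^{-\min(A)}$ then yields $v_B(t) \in M_2(A)$.

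There is no serious obstacle here; the only thing to watch is the bookkeeping at the endpoints. A bump around an interior point $t$ requires two jumps, whereas a bump pushed against the boundary point $0$ or $1$ requires only one --- which is precisely why $v_B(0)$ and $v_B(1)$ land in $M_1(A)$ while a generic $v_B(t)$ only lands in $M_2(A)$. One should also confirm that each approximating function genuinely matches the normal form \eqref{eq:stepFunction} (in particular that allowing some of the $y_i$ to vanish is permitted, as it is) and recall that $M_k(A)$ is a closure, so limits of moment vectors do belong to it.
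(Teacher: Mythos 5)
Your proof is correct and follows essentially the same route as the paper's: a one-sided narrow bump at $0$ and at $1$ (each needing one breakpoint) and a narrow bump around an interior $t$ (needing two), with the factor $t^{-\min(A)}$ handled by the scaling invariance of $M_k(A)$. The only cosmetic difference is that the paper uses the one-sided interval $(t,t+\epsilon]$ with the scalar $t^{-\min(A)}$ built into the step function, while you use a symmetric bump and rescale afterwards; both are fine.
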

\begin{proof}
For $0< t<1$, and $0<\epsilon < 1-t$ consider the step function 
$f = \epsilon^{-1} t^{-\min(A)} {\bf 1}_{(t,t + \epsilon]}$ in $S_2$.  
By continuity, the integral $\int_0^1x^af(x)dx$ limits to $t^{a-\min(A)}$ as
$\epsilon \to 0$, thus $M_2(A)$ contains the limit point $v_B(t) =
(t^b)_{b\in B}$. Similarly,  the limit as $\epsilon\to 0$ of the $A$-moment vectors  
of step functions ${f = (\min(A)+1)\epsilon^{-1-\min(A)}   {\bf 1}_{[0, \epsilon]}}$
and ${f = \epsilon^{-1}  {\bf 1}_{(1-\epsilon, 1]}}$ in $S_1$ 
are $v_B(0)$ and $v_B(1)$, respectively. Therefore these vectors belong to $M_1(A)$. 
\end{proof}

A corollary of this statement is that $ M_k(A)=M(A)$ for $k = 2|A|$. 
By Carath\'eodory's Theorem, any point in $M(A)$ is in the
conical hull of at most $|A|$ points of the 
form $(t^a)_{a\in A}$ where $t\in [0,1]$, each of which 
belongs to $M_2(A)$ by Proposition~\ref{prop:shiftDirac=2}. 
By Remark~\ref{rem:sum}, the sum of 
$|A|$ elements from $M_{2}(A)$ belongs to $M_{2|A|}(A)$, giving 
$M(A)\subseteq M_{2|A|}(A)$. 
In fact, $M_k(A)$ fills out the whole moment cone much sooner: 

\begin{thm}\label{thm:MomentBound1}
If $k\geq |A|-1$, $ M_k(A)=M(A)$.
\end{thm}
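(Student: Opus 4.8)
The plan is to prove $M(A)\subseteq M_{n-1}(A)$, where $n=|A|$; the reverse inclusion is immediate, and since $M_k(A)$ is nondecreasing in $k$ (Remark~\ref{rem:sum}) it suffices to treat $k=n-1$. The case $n=1$ is trivial (both cones are $\R_{\geq 0}$), so assume $n\geq 2$. Recall from Lemma~\ref{lem:moment_shift} and the discussion before it that $M(A)$ is a closed, pointed, full-dimensional convex cone equal to the conical hull of the compact curve $\{v_B(t):t\in[0,1]\}$, with $B=\{a-\min(A):a\in A\}$. The argument splits into two steps: first show every point of $\partial M(A)$ is a limit of $A$-moments of step functions with at most $n-1$ break points, then bootstrap to the interior.

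\emph{Boundary points.} Let $m_0\in\partial M(A)$. Since $M(A)$ is a closed convex cone, there is a supporting hyperplane at $m_0$: a nonzero $q\in\R^A$ with $\langle q,\cdot\rangle\geq 0$ on $M(A)$ and $\langle q,m_0\rangle=0$; by duality (Proposition~\ref{prop:boundaryMoments}) the polynomial $q(x)=\sum_{a\in A}q_ax^a$ is nonnegative on $[0,1]$. Because $\langle q,v_B(t)\rangle=t^{-\min(A)}q(t)$ for $t>0$ and $\langle q,v_B(0)\rangle=q_{\min(A)}$, the extreme rays $v_B(t)$ of $M(A)$ lying in the exposed face $q^{\perp}$ are precisely those where $t$ is a zero of $q$ in $[0,1]$ (together with $v_B(0)$ when $q_{\min(A)}=0$); these form a finite set, so $m_0=\sum_j c_jv_B(t_j)$ with $c_j\geq 0$. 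Now invoke the Descartes-type bound that a real linear combination of $\{x^a:a\in A\}$ has at most $n-1$ positive zeros counted with multiplicity (proved by repeated use of Rolle's theorem, or via the Chebyshev-system property of $\{x^a\}_{a\in A}$ on $(0,\infty)$). Since $q\geq 0$ on $[0,1]$, each zero in $(0,1)$ has even multiplicity $\geq 2$ and each zero at $0$ or $1$ has multiplicity $\geq 1$. By Proposition~\ref{prop:shiftDirac=2}, each $c_jv_B(t_j)$ is a limit of $A$-moments of a step function supported on one short interval, costing one break point if $t_j\in\{0,1\}$ and two if $t_j\in(0,1)$; choosing these intervals pairwise disjoint and summing, we obtain a single step function whose number of break points is at most the total multiplicity of the zeros of $q$ in $[0,1]$ (with $v_B(0)$, when present, contributing $1$ and forcing $q$ to be supported on an $(n-1)$-element subset of $A$), hence at most $n-1$. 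Letting the interval widths go to $0$ gives $m_0\in M_{n-1}(A)$.

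\emph{Interior points.} Let $m$ be in the interior of $M(A)$ and let $\bar m=(\tfrac1{a+1})_{a\in A}$ be the $A$-moment vector of the constant density $\mathbf{1}_{[0,1]}\in S_0$, which lies in the interior of $M(A)$ because $\langle q,\bar m\rangle=\int_0^1q>0$ for every nonzero $q$ in the dual cone. As $M(A)$ is a closed pointed cone, $\epsilon^{*}:=\sup\{\epsilon\geq 0: m-\epsilon\bar m\in M(A)\}$ is finite and strictly positive, and $m_0:=m-\epsilon^{*}\bar m$ lies on $\partial M(A)$. By the previous step there are $g_\ell\in S_{n-1}$ whose $A$-moments converge to $m_0$; then $g_\ell+\epsilon^{*}\mathbf{1}_{[0,1]}$ is again nonnegative and—crucially—has exactly the same break points as $g_\ell$, so it lies in $S_{n-1}$, and its $A$-moment vector converges to $m_0+\epsilon^{*}\bar m=m$. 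Hence $m\in M_{n-1}(A)$. Combining, $M(A)=\partial M(A)\cup(\text{interior of }M(A))\subseteq M_{n-1}(A)$, which completes the proof.

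\emph{Where the work is.} The heart is the break-point bookkeeping in the boundary step: the point is that the cost of a Dirac-type spike at $t$—one break point at an endpoint of $[0,1]$, two in the interior—never exceeds the multiplicity of $t$ as a zero of the exposing polynomial $q$, and a polynomial supported on $A$ cannot have more than $|A|-1$ positive zeros with multiplicity. This is exactly what produces the constant $n-1$; the interior reduction is then nearly free, using only that adding a positive constant to a step function creates no new break points. (The same sparsity philosophy, applied to the polynomials cutting out faces, is what will later show $M_k(A)\subsetneq M(A)$ for $k<n-1$ and yield the tightness half of the Main Theorem.)
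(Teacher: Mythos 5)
Your proof is correct and follows essentially the same route as the paper's: boundary points are handled via a supporting hyperplane giving a nonnegative polynomial supported on $A$ (after the shift to $B$), Descartes' rule of signs bounds its zeros in $(0,1]$ by $|A|-1$ (or $|A|-2$ when $0$ is a root), the evenness of interior multiplicities matches the two-breakpoint cost of an interior spike from Proposition~\ref{prop:shiftDirac=2}, and interior points are reduced to the boundary by subtracting the maximal multiple of the constant function's moment vector. The only cosmetic difference is that you add the constant back at the level of step functions rather than invoking $M_k(A)+M_0(A)\subseteq M_k(A)$ from Remark~\ref{rem:sum}, which is the same argument.
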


The proof of this theorem relies on understanding the points on the boundary of $M(A)$. 

\begin{prop}\label{prop:boundaryMoments}
Let $A\subset \N$ be finite with $0\in A$. 
If ${\bf m} = (m_a)_{a\in A}$ belongs to the Euclidean boundary of $M(A)$, 
then any representing measure $\mu$ on $[0,1]$ with $m_a = \int x^a d\mu$
has finite support. Specifically, the support of $\mu$ is a subset of
the roots contained in $[0,1]$ of a polynomial nonnegative on $[0,1]$ and of the form $p(x) = \sum_{a\in A} p_a x^a$.
The vector ${\bf m}$ is a conic combination of 
the vectors $v_A(r)$ where $r$ ranges over the roots of $p$. 
\end{prop}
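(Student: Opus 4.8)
The plan is to exploit convex duality between the moment cone $M(A)$ and the cone of polynomials supported on $A$ that are nonnegative on $[0,1]$. First I would record that, because $0\in A$, the moment curve $\{v_A(t):t\in[0,1]\}$ is a compact subset of $\R^A$ missing the origin (its first coordinate is $t^0=1$), so its conical hull $M(A)$ is closed, and it is full-dimensional since any $|A|$ of the vectors $v_A(t)$ at distinct positive nodes are linearly independent by a generalized Vandermonde argument. Consequently $M(A)$ is a full-dimensional closed convex cone, so at any point of its Euclidean boundary there is a supporting hyperplane.

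Next I would identify the dual cone: a vector $\mathbf{p}=(p_a)_{a\in A}$ lies in $M(A)^*$ if and only if $\langle\mathbf{p},v_A(t)\rangle=\sum_{a\in A}p_at^a\geq 0$ for all $t\in[0,1]$, that is, exactly when the polynomial $p(x)=\sum_{a\in A}p_ax^a$ is nonnegative on $[0,1]$. Since $M(A)$ is a closed convex cone and $\mathbf{m}$ lies in $\partial M(A)$, a supporting hyperplane at $\mathbf{m}$ produces a nonzero $\mathbf{p}\in M(A)^*$ with $\langle\mathbf{p},\mathbf{m}\rangle=0$ (the cone over $\mathbf{m}$ lies in the supporting halfspace, forcing the functional to vanish on $\mathbf{m}$). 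The associated $p(x)=\sum_{a\in A}p_ax^a$ is then a nonzero polynomial, since $\mathbf{p}\neq 0$, and is nonnegative on $[0,1]$.

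Now let $\mu$ be any nonnegative measure on $[0,1]$ with $m_a=\int_0^1 x^a\,d\mu$ for all $a\in A$. Substituting and using the supporting equation gives
\[
\int_0^1 p(x)\,d\mu(x)=\sum_{a\in A}p_a\int_0^1 x^a\,d\mu(x)=\sum_{a\in A}p_am_a=\langle\mathbf{p},\mathbf{m}\rangle=0.
\]
Since the integrand $p$ is nonnegative on $[0,1]$ and $\mu\geq 0$, this forces $\mu$ to be supported on the zero set $Z=\{x\in[0,1]:p(x)=0\}$. As $p$ is a nonzero polynomial, $Z$ is finite, so $\mu$ has finite support; writing $\mu=\sum_{r\in Z}c_r\delta_r$ with $c_r=\mu(\{r\})\geq 0$ yields $\mathbf{m}=\int_0^1 v_A(x)\,d\mu(x)=\sum_{r\in Z}c_r\,v_A(r)$, the claimed conic combination.

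I do not anticipate a genuine obstacle: the statement is the classical correspondence between boundary points of a truncated moment cone and zeros of a \emph{boundary polynomial}. The only point deserving care is the appeal to the supporting hyperplane theorem, which is why it is worth noting up front that $0\in A$ makes $M(A)$ closed and full-dimensional, so that its relative and topological (Euclidean) boundaries agree; this is also precisely why the hypothesis $0\in A$ appears, since otherwise one must first pass to the closure and shift $A$ as in Lemma~\ref{lem:moment_shift}.
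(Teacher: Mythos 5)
Your proof is correct and follows essentially the same route as the paper's: take a supporting hyperplane at $\mathbf{m}$, interpret the defining functional as a polynomial $p(x)=\sum_{a\in A}p_ax^a$ nonnegative on $[0,1]$, and use $\int p\,d\mu=\ell(\mathbf{m})=0$ to force the support of $\mu$ into the finite zero set of $p$. You are in fact slightly more careful than the paper in justifying the existence of the supporting hyperplane and in noting that $\mathbf{p}\neq 0$ (hence $p\not\equiv 0$), which is what makes the root set finite.
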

\begin{proof}
Let $\ell:\R^{A}\to \R$ be a linear function $\ell({\bf v}) = \sum_{a\in A} p_a v_a$ 
defining a supporting hyperplane of $M(A)$ 
at ${\bf m}$. That is, $\ell({\bf v})\geq 0$ for all ${\bf v}\in M(A)$ and 
$\ell({\bf m})=0$.  Consider the polynomial  $p(x) = \ell(v_A(x)) = \sum_{a\in A} p_a x^a$. 
Since $v_A(t)\in M(A)$ for all $t\in [0,1]$, $p$ is nonnegative on $[0,1]$. 
Furthermore, for any measure $\mu$ with moments ${\bf m}$, 
\[
\int p(x) d \mu  = \sum_{a\in A} p_a m_a = \ell({\bf m}) = 0. 
\]
The measure $\mu$ is nonnegative and the polynomial $p$ is nonnegative on $[0,1]$. 
From this we see that the support of the measure $\mu$ must be contained in 
the (finite) set of roots $R$ of $p(x)$.  Specifically, $\mu = \sum_{r\in R} w_r \delta_{r}$
for some $w_r\in \R_{\geq 0}$; therefore, ${\bf m} = \sum_{r\in R} w_r v_A(r)$.
\end{proof}

\begin{proof}[Proof of Theorem~\ref{thm:MomentBound1}]
First, consider a point ${\bf m}$ in the boundary of $ M(A)$. 
By Lemma~\ref{lem:moment_shift}, $M(A) = M(B)$ where $B =  \{ a-\min(A) : a\in A\}$, 
and so ${\bf m}$ also belongs to the boundary of $ M(B)$.
By Proposition~\ref{prop:boundaryMoments}, 
${\bf m}$ is the vector of $B$-moments of a measure 
 $\mu$ supported on the roots of 
 a nonnegative polynomial on $[0,1]$ of the form $p(x) = \sum_{a\in B} p_a x^a$.
Let $b$ be the number of distinct roots of $p$ in the set $\{0,1\}$ and 
$i$ be the number of distinct roots in of $p$ in the open interval $(0,1)$. 
Then ${\bf m}$ is in the conical hull of the $b+i$ points 
given by $v_B(r)$ where $r$ ranges over these roots. 
By Proposition~\ref{prop:shiftDirac=2}, 
${\bf m}$ belongs to $M_{k}(A)$ for $k=b+2i$. 

By Descartes' rule of signs, the number of positive roots of $p$, 
counting multiplicity, is at most the number of sign changes in the
list of coefficients 
$\{p_a\}_{a\in B}$. If $p_0 \neq 0$, then $p$ has at most $|B|-1$ roots in $\R_{>0}$. 
If $p_0 = 0$,  then $p$ is the sum of at most $|B|-1$ nonzero terms and its
number of roots in $\R_{>0}$ must be smaller or equal to $|B|-2$.  
Note that every root of $p$ in $(0,1)$ must have even multiplicity greater or
equal to $2$.  All together this gives $b+2i\leq |B|-1= |A|-1$. 

Now consider ${\bf m}$ in the interior of $M(A)$. 
Let ${\bf c} = (1/(a+1))_{a\in A} \in M_0(A)$ denote the vector obtained by 
integrating against the constant step function of height one.  
Let $\lambda^*$ be the maximum value of $\lambda\in \R$ 
for which ${\bf m} - \lambda {\bf c}$ belongs to $M(A)$. 
From ${\bf m}\in M(A)$, we see that $\lambda^*\geq 0$. 
Moreover, since $M(A)$ is pointed, $-{\bf c}$ does not belong to $M(A)$,
meaning that for sufficiently large $\lambda$, ${\bf m} - \lambda {\bf
  c}$ does not belong to $M(A)$.
Since $M(A)$ is closed, it follows that such a maximum $\lambda^*$ must exist.  

The point ${\bf m} - \lambda^* {\bf c}$ belongs to the boundary of $M(A)$.
By the arguments above, ${\bf m} - \lambda^* {\bf c}$ belongs to 
$M_{k}(A)$ for $k\geq  |A|-1$.  Since ${\bf c}\in M_0(A)$ and 
\[
{\bf m}  = ({\bf m} - \lambda^* {\bf c}) + \lambda^*{\bf c}, 
\]
the point ${\bf m}$ also belongs to $M_{k}(A)$ for $k\geq  |A|-1$. 
\end{proof}

\begin{remark}
 It follows from the proof of Theorem~\ref{thm:MomentBound1} that for
  all $k\geq 0$, $M_k(A)$ is star convex with respect to the point 
  ${\bf c} =(1/(a+1))_{a\in A} $, the $A$-moment of the constant function.
  Indeed, since ${\bf c}$ belongs to $M_{0}(A)$, 
  $\lambda {\bf c} + M_k(A) \subseteq M_k(A)$ for all $\lambda \geq 0$.  
\end{remark}

We can go further in characterizing the facial structure of the boundary
of $M(A)$. Through a connection to Schur polynomials, we can deduce
linear independence among sets of points from the curve of the correct size.

\begin{prop} \label{prop:SchurDetPos}
For a collection $A$ of  integers $0= a_1<a_2 < \hdots< a_n$
and any real values $0\leq r_1 < r_2 < \hdots < r_{n} \leq 1$, the
determinant of the matrix $\S_A$ is strictly positive, where 
\[
\S_A({\bf r})= \begin{pmatrix}
          1& 1 & \ldots  & 1 \\
          r_1^{a_2}& r_2^{a_2}  & \ldots  & r_n^{a_2} \\
        \vdots & \ \vdots & & \vdots \\
	 r_1^{a_n} & r_2^{a_n} & \ldots & r_n^{a_n}
      \end{pmatrix}.
\]
\end{prop}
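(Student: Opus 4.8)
The plan is to show that $\det \S_A(\mathbf r)$ is a generalized Vandermonde determinant whose value is, up to the classical Vandermonde factor, a Schur polynomial evaluated at the positive reals $r_1,\dots,r_n$; since Schur polynomials have nonnegative integer coefficients and are strictly positive on positive inputs, the result follows once we handle the boundary values $r_1 = 0$ and $r_n = 1$ separately.

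First I would recall the standard bialternant identity. Given the strictly increasing exponent sequence $0 = a_1 < a_2 < \dots < a_n$, set $\lambda_j = a_{n+1-j} - (n-j)$ for $j = 1,\dots,n$. Because the $a_i$ are strictly increasing integers, $\lambda = (\lambda_1 \geq \lambda_2 \geq \dots \geq \lambda_n = a_1 = 0)$ is a partition (a weakly decreasing sequence of nonnegative integers). The matrix $\S_A(\mathbf r)$ has $(i,j)$ entry $r_j^{a_i}$, so up to reordering rows (a fixed sign, in fact $+1$ after reversing, but this is immaterial since we only need positivity up to that fixed permutation — I will order things so the sign is $+1$) its determinant is the bialternant $a_\lambda(\mathbf r) := \det(r_j^{\lambda_i + n - i})$. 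The Jacobi bialternant formula then gives
\[
\det \S_A(\mathbf r) \ = \ s_\lambda(r_1,\dots,r_n)\cdot \prod_{1\le i<j\le n}(r_j - r_i),
\]
where $s_\lambda$ is the Schur polynomial. I would verify the exponent bookkeeping carefully here — matching $\{\lambda_i + n - i\} = \{a_1,\dots,a_n\}$ as sets and confirming the row permutation is even — since that is the one place a sign or indexing slip could creep in.

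Next I would invoke the two standard facts about Schur polynomials: (1) $s_\lambda$ is a polynomial in $r_1,\dots,r_n$ with nonnegative integer coefficients (e.g. via the combinatorial definition $s_\lambda = \sum_T \mathbf r^{\mathrm{wt}(T)}$ over semistandard Young tableaux, and the fact that at least one such tableau exists for any partition with at most $n$ parts), and (2) the constant term of $s_\lambda$ corresponds to no tableau unless $\lambda = 0$, but in any case $s_\lambda$ restricted to strictly positive arguments is strictly positive because every monomial appearing is positive and at least one monomial appears. Combined with $\prod_{i<j}(r_j - r_i) > 0$ for $0 \le r_1 < \dots < r_n \le 1$ (this product is strictly positive as soon as the $r_i$ are distinct), we get $\det \S_A(\mathbf r) > 0$ whenever all $r_i$ are strictly positive.

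Finally I would address the endpoints. If $r_1 = 0$: since $a_1 = 0$, the first row of $\S_A(\mathbf r)$ is $(1,1,\dots,1)$ and is unaffected, while in every other row the first entry $r_1^{a_i} = 0$ (as $a_i \geq 1$). Expanding along the first column, $\det \S_A(\mathbf r) = \det \S_{A'}(r_2,\dots,r_n)$ where $A' = \{a_2,\dots,a_n\}$ shifted — more precisely it equals a generalized Vandermonde in $r_2,\dots,r_n > 0$ with distinct positive exponents, to which the same Schur-positivity argument applies (or one inducts on $n$). If $r_n = 1$ this is even easier: $r_n = 1$ is an interior-style value only in that it is an allowed boundary point, but the Vandermonde product and Schur polynomial arguments above never used $r_i < 1$, only $r_i > 0$ and distinctness, so the case $r_n = 1$ with $r_1 > 0$ is already covered; and the case $r_1 = 0, r_n = 1$ together is handled by first stripping the zero column as above and then applying the $r_i > 0$ argument to the remaining $r_2 < \dots < r_{n-1} < 1$. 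The main obstacle is purely organizational: getting the partition $\lambda$, the shifted exponents, and the sign of the row permutation exactly right so that the bialternant identity applies cleanly; once that is set up, positivity is immediate from the tableau definition of $s_\lambda$.
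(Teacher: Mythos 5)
Your proposal is correct and follows essentially the same route as the paper: the bialternant formula reduces $\det\S_A(\mathbf r)$ to a positive Vandermonde factor times a Schur polynomial, which is positive on positive inputs since it is a nonempty sum of monomials with nonnegative coefficients. Your treatment of the $r_1=0$ case by cofactor expansion along the first column is one of the two equivalent observations the paper itself offers (the other being the existence of a semistandard tableau of shape $\lambda$ avoiding the entry $1$, which exists precisely because $\lambda_n = a_1 = 0$), so no substantive difference remains.
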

\begin{proof}
By the bialternant formula for Schur polynomials, 
the determinant of the matrix $\S_A({\bf r})$ can be
expressed as
\begin{equation}\label{eq:schur}
  \det (\S_A)= \left(\prod_{1\leq i<j\leq
    n}(r_j-r_i)\right)\mathfrak{s}_{\lambda}(r_1,\ldots,r_n) \text{
  for } \lambda = (a_n-(n-1),a_{n-1}-(n-2),\ldots, a_1),
\end{equation}
where $\mathfrak{s}_{\lambda}(x_1, \hdots, x_n)$ denotes the Schur
polynomial associated to the partition $\lambda$. By definition, the
Schur polynomial
$\mathfrak{s}_\lambda(x_1, \hdots, x_n)$ is the sum of monomials ${\bf
  x}^T$ over all semistandard Young tableaux $T$ of
shape $\lambda$. One can observe, either from expanding the
determinant of $\S_A(r)$ along the first column, or by filling out a
semistandard Young Tableau of shape $\lambda$ without using the number
$1$, that $x_1$ does not appear in all the monomials of the
determinant of
$\S_A$. It follows that $\det (\S_A)$ is strictly positive for any
$0\leq r_1 < r_2 < \hdots < r_{n} \leq 1$.
\end{proof}

\begin{cor}\label{cor:simplicial}
All proper faces of $M(A)$ are simplicial. 
\end{cor}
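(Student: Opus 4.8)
The plan is to combine Proposition~\ref{prop:boundaryMoments}, which says every boundary point of $M(A)$ is a nonnegative combination of the vectors $v_A(r)$ for $r$ ranging over a finite set of points in $[0,1]$, with Proposition~\ref{prop:SchurDetPos}, which gives that any such collection of vectors (of size at most $|A|$) is linearly independent. Concretely, reduce first to the case $0 \in A$ via Lemma~\ref{lem:moment_shift}; since $M(A) = M(B)$ with $0 \in B$ and $|B| = |A|$, it suffices to prove the statement for $B$. So assume $0 = a_1 < a_2 < \cdots < a_n$ with $A = \{a_1, \ldots, a_n\}$.

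Next, take a proper face $F$ of $M(A)$. There is a supporting linear functional $\ell(\mathbf{v}) = \sum_{a \in A} p_a v_a$ with $\ell \geq 0$ on $M(A)$ and $F = M(A) \cap \ell^{-1}(0)$. As in the proof of Proposition~\ref{prop:boundaryMoments}, the associated polynomial $p(x) = \ell(v_A(x)) = \sum_{a\in A} p_a x^a$ is nonnegative on $[0,1]$, and every point of $F$ is a nonnegative combination of the vectors $v_A(r)$ where $r$ ranges over the roots $R = \{r \in [0,1] : p(r) = 0\}$; moreover these are exactly the extreme rays of $F$ that are rays through points $v_A(r)$. Because $p$ is a nonzero polynomial supported on $A$, by Descartes' rule of signs (exactly as in the proof of Theorem~\ref{thm:MomentBound1}) it has at most $|A| - 1 = n - 1$ roots in $(0,\infty)$, counted with multiplicity; together with the possible root at $0$ this gives $|R| \leq n$. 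Order the roots as $0 \leq r_1 < r_2 < \cdots < r_m \leq 1$ with $m = |R| \leq n$. By Proposition~\ref{prop:SchurDetPos} (applied, if $m < n$, to the first $m$ of these roots together with the full exponent set, or more simply: the $n \times m$ matrix with columns $v_A(r_j)$ has an $m \times m$ minor with strictly positive determinant by the same Schur-positivity argument restricted to the exponents $a_1 < \cdots < a_m$ and points $r_1 < \cdots < r_m$), the vectors $v_A(r_1), \ldots, v_A(r_m)$ are linearly independent. Hence $F$ is contained in the conical hull of $m \leq n$ linearly independent vectors, so $F$ is a simplicial cone. This shows every proper face of $M(A)$ is simplicial.

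The main obstacle is the bookkeeping when the number of roots $m$ is strictly less than $n$: Proposition~\ref{prop:SchurDetPos} is stated for exactly $n$ points and the full exponent set $A$. The fix is to observe that its proof — via the bialternant formula and the observation that $x_1$ is absent from some monomial of the Schur polynomial — applies verbatim to any $m$-element sub-collection $a_1 < \cdots < a_{i_m}$ of exponents (as long as it still contains $0$, which we can arrange, or even without, by a further shift), yielding a strictly positive $m \times m$ minor of the matrix $[v_A(r_1) \mid \cdots \mid v_A(r_m)]$. One should also note that $F$ is generated as a cone by the $v_A(r)$ alone: this is immediate since $F$ is a face, so every extreme ray of $F$ is an extreme ray of $M(A)$, and the extreme rays of $M(A)$ are precisely the rays through $v_A(t)$, $t \in [0,1]$ (using $0 \in A$). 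Thus $F$ is a cone over a linearly independent set, i.e. simplicial.
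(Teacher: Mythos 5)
Your proof is correct and follows essentially the same route as the paper: reduce to $0\in A$ via Lemma~\ref{lem:moment_shift}, observe that a proper face is the conical hull of finitely many points $v_A(r_j)$ on the moment curve, and invoke the Schur-positivity of Proposition~\ref{prop:SchurDetPos} to get linear independence. The paper's version is slightly leaner --- it skips the supporting-hyperplane/Descartes bound on the number of generators and instead argues that more than $\dim(F)$ generators would force a linearly dependent subset of size $\dim(F)+1\leq |A|$, contradicting Proposition~\ref{prop:SchurDetPos} --- but your extra bookkeeping for the case $m<n$ (passing to an $m\times m$ minor) is a legitimate way to close the same small gap.
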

\begin{proof}
By Lemma~\ref{lem:moment_shift}, we can assume that $0\in A$. 
Recall that $M(A)$ is the conical hull over the curve segment $\{v_{A}(t):t\in
[0,1]\}$ and any proper face $F$ of this cone can be expressed as the
conical hull of some points $v_{A}(r_1), \hdots, v_{A}(r_k)$ where 
$0\leq r_1 < r_2 < \hdots < r_{k} \leq 1$.  
If $k> \dim(F)$, then there is a subset of these points of size $\dim(F)+1\leq n$, which 
necessarily lie in $F$ and are therefore linearly dependent,
contradicting Proposition~\ref{prop:SchurDetPos}.  Therefore $k =
\dim(F)$ and $F$ is simplicial.
\end{proof}

This lemma lets us assign an \emph{index} to points on the boundary of $M(A)$, following \cite[Ch. 10.2]{Sch}. 
Let ${\bf m}$ be a point on the boundary of $M(A)$. By Corollary~\ref{cor:simplicial}, 
there is a \emph{unique} representation of ${\bf m}$ as $\sum_{j=1}^k w_j v_{A}(r_j)$ 
where $0\leq r_1 < \hdots < r_k\leq 1$ and $w_1, \hdots, w_k\in \R_{>0}$. 
We define the \textbf{index} of ${\bf m}$, denoted ${\rm ind}({\bf m})$, to be 
$b+2i$ where $b = \#\{ j : r_j\in \{0,1\}\}$ and $i = \#\{ j : r_j\in (0,1)\}$.
By Proposition~\ref{prop:shiftDirac=2}, any point ${\bf m}$ on the boundary of $M(A)$ 
belongs to $M_{{\rm ind}({\bf m})}(A)$.

To prove the converse, we must rule out the
possibility that ${\bf m} \in M_k(A)$ for $k < {\rm ind}({\bf m})$. In other words, 
it is impossible to approach a point ${\bf m}$ on the boundary of $M(A)$ 
with moment vectors of step functions with fewer breakpoints than expected. 
\begin{lem}\label{lem:useful}
Let ${\bf m}$ be a point on the boundary of $M(A)$. For $k<{\rm ind}({\bf m})$, ${\bf m}\not\in M_k(A)$. 
That is, if ${\bf m}\in M_k(A)$, then ${\rm ind}({\bf m})\leq k$.
\end{lem}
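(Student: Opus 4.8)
The plan is to argue by contradiction: suppose $\mathbf{m}$ lies on the boundary of $M(A)$ and that $\mathbf{m} \in M_k(A)$ with $k < {\rm ind}(\mathbf{m})$. By Lemma~\ref{lem:moment_shift} we may assume $0 \in A$, so write $A = \{0 = a_1 < a_2 < \cdots < a_n\}$. Since $\mathbf{m}$ is on the boundary, there is a supporting hyperplane given by a polynomial $p(x) = \sum_{a \in A} p_a x^a$ that is nonnegative on $[0,1]$ and whose roots in $[0,1]$, call them $r_1 < \cdots < r_\ell$, carry the unique representation $\mathbf{m} = \sum_j w_j v_A(r_j)$ from Corollary~\ref{cor:simplicial}. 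The key invariant to track is $\int_0^1 p(x) f(x)\, dx = \langle \mathbf{p}, \mathbf{m}_f \rangle$, where $\mathbf{m}_f$ is the $A$-moment vector of a step function $f$. If a sequence of step functions $f_N \in S_k$ has moment vectors converging to $\mathbf{m}$, then $\int_0^1 p(x) f_N(x)\, dx \to \langle \mathbf{p}, \mathbf{m} \rangle = 0$. Since $p \geq 0$ on $[0,1]$ and $f_N \geq 0$, the mass of $f_N$ must concentrate near the root set $\{r_1, \dots, r_\ell\}$ in the limit.

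The main work is a counting argument showing $k \geq {\rm ind}(\mathbf{m}) = b + 2i$, where $b$ counts roots in $\{0,1\}$ and $i$ counts roots in $(0,1)$. First I would establish that the limiting measure $\mu = \lim f_N \,dx$ (extracting a weak-* convergent subsequence, which exists since the total masses are bounded — this follows by testing against a polynomial in $M(A)$ that is strictly positive on $[0,1]$) is supported on $\{r_1, \dots, r_\ell\}$ and equals $\sum_j w_j \delta_{r_j}$ by uniqueness of the representation. The delicate point is that a step function in $S_k$ has only $k+1$ constant pieces, hence its graph can "spike up" to approximate a Dirac mass only in limited ways. Concretely, to produce a Dirac mass at an interior point $r_j \in (0,1)$ in the limit, the functions $f_N$ must have a narrow tall interval straddling $r_j$, which consumes (at least) two breakpoints — one on each side — whereas a Dirac mass at $0$ or $1$ needs only one breakpoint, since the boundary of the domain supplies the other cutoff for free. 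Making this rigorous is the crux: I would argue that if $f_N$ uses fewer than $b + 2i$ breakpoints, then by pigeonhole some interior root $r_j$ lies in a piece of $f_N$ that also contains an open neighborhood not shrinking to $\{r_j\}$, so either the piece contributes mass away from the root set (forcing $\langle \mathbf{p}, \mathbf{m}_{f_N}\rangle$ bounded below by a positive constant, a contradiction), or two distinct interior roots share a single "spike interval," which is impossible since $p > 0$ strictly between consecutive roots and the spike would pick up positive $\int p f_N$ from the gap.

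I expect the main obstacle to be controlling the bookkeeping uniformly along the sequence: a priori the breakpoints of $f_N$ can wander, split, or merge as $N \to \infty$, so one cannot simply assign breakpoints to roots once and for all. The clean way around this is to pass to a subsequence along which the combinatorial type stabilizes — the number of pieces with mass escaping to each point of $[0,1]$ converges — and then run the counting argument on the limiting configuration. One then checks that each interior root in the support of $\mu$ absorbs at least two of the "active" breakpoints and each boundary root at least one, and since $\mu$ has full mass $\sum_j w_j$ concentrated at these $\ell$ points, every piece is accounted for, giving $k \geq b + 2i = {\rm ind}(\mathbf{m})$. Combined with the reverse inequality already noted after Proposition~\ref{prop:shiftDirac=2}, this pins down the index exactly and completes the proof.
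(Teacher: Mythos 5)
Your counting heuristic --- an interior Dirac mass costs two breakpoints, a boundary one costs one, hence $k\geq b+2i={\rm ind}({\bf m})$ --- is exactly the inequality the paper proves, and your use of the supporting polynomial $p$ to force the limit measure onto the root set reproduces the content of Proposition~\ref{prop:boundaryMoments}. But as written the argument stops at what you yourself call the crux: you never actually carry out the bookkeeping that assigns two breakpoints of $f_N$ to each interior root, and the device you propose (``pass to a subsequence along which the combinatorial type stabilizes'') is left as a gesture. This is a genuine gap, because the whole difficulty of the lemma is that ${\bf m}$ is only a \emph{limit} of moments of step functions, so one cannot reason about ``the'' breakpoints of a single $f$ realizing ${\bf m}$; the weak-$*$ limit of the measures tells you where the mass goes but not, by itself, how many breakpoints were consumed getting it there.

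The paper closes this gap with a cleaner move that you would do well to adopt: rather than tracking a sequence of step functions, it parametrizes $M_k(A)\cap\{m_0=1\}$ as the image of the \emph{compact} polytope $P$ of \eqref{eq:parameterPolytope} (ordered breakpoints $0\leq s_1\leq\cdots\leq s_k\leq 1$, together with the masses $w_i=y_i(s_i-s_{i-1})$ summing to $1$) under the continuous map $\mu_A$ of \eqref{eq:muA}. Since the continuous image of a compact set is closed, the closure in the definition of $M_k(A)$ is already attained: your boundary point ${\bf m}$ \emph{equals} $\mu_A({\bf s},{\bf w})$ for some possibly degenerate $({\bf s},{\bf w})\in P$, with no sequence left to analyze. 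Proposition~\ref{prop:boundaryMoments} then forces $s_{i-1}=s_i$ for every $i$ with $w_i>0$ (otherwise the representing measure contains an interval and ${\bf m}$ is interior), so ${\bf m}=\sum_{i\in I}w_i v_A(s_i)$, and the count is now immediate: a value $s_i\in(0,1)$ with $w_i>0$ satisfies $s_{i-1}=s_i$ with both indices among $\{1,\dots,k\}$, so it is hit by at least two of the $k$ coordinates of ${\bf s}$, while a value in $\{0,1\}$ is hit by at least one. This is precisely your pigeonhole, but run on a single limiting parameter configuration whose existence compactness hands you for free. If you replace your weak-$*$ subsequence extraction by convergence of the parameter vectors $({\bf s}^{(N)},{\bf w}^{(N)})$ in $P$, your proof becomes the paper's.
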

\begin{proof}
Note that for any non-zero point ${\bf m}$ in $M(A)$, $m_0>0$ and so we can rescale 
${\bf m}$ to have $m_0 = 1$.  We will 
write $M_k(A) \cap \{m_0=1\}$ as the image of a compact polytope under a polynomial map
and check that any point ${\bf m}$ in the image of this map and the boundary of $M(A)$ 
has index $\leq k$. 

Any function $f \in S_k$ can be written as 
$f = y_1 {\bf 1}_{[0,s_1]} +\sum_{i=2}^{k+1} y_i {\bf 1}_{(s_{i-1},s_{i}]}$ 
for some values $0=s_0 < s_1 < \ldots < s_k < s_{k+1}=1$ 
and $y_i\geq 0$ for all $i$. 
We now introduce transformed $w$-coordinates by letting $w_i = y_i(s_i - s_{i-1})$
denote the area $\int_{s_{i-1}}^{s_i}f(x)dx$. The corresponding moment in $M_k(A)$ 
is given by the image of the point $({\bf s}, {\bf w})  = (s_1, \hdots, s_k, w_1, \hdots, w_{k+1})$ under 
the polynomial map
\begin{equation}\label{eq:muA}
\mu_A({\bf s}, {\bf w}) = \left(\sum_{i = 1}^{k+1} y_i
\dfrac{s_{i}^{a+1}-s_{i-1}^{a+1}}{a+1}\right)_{a \in A} =
\left(\sum_{i = 1}^{k+1} w_i \dfrac{(s_{i}^{a} + s_i^{a-1}s_{i-1} + \cdots + s_{i-1}^{a})}{a+1}\right)_{a \in A}. 
\end{equation}
Note that the constraint that $m_0=1$ translates into $\sum_i w_i=1$. 
Consider the polytope 
\begin{equation}\label{eq:parameterPolytope}
P = \left\{({\bf s}, {\bf w})\in \R^{k}\times \R^{k+1} \ \text{ such that } \ 
0 \leq s_1 \leq \ldots \leq s_k \leq 1, w_i \geq 0, \  \sum_{i=1}^{k+1} w_i = 1\right\},
 \end{equation}
which is a product of two simplices of dimension $k$. 
The moments of step functions $f\in S_k$ with $\int f(x) dx = 1$
is the image under $\mu_A$ of the set of points $({\bf s}, {\bf w})\in P$ 
with distinct $0<s_1<\hdots < s_k<1$. 
Its closure  is $M_k(A)\cap \{m_0=1\}$, which necessarily coincides with 
the image of $P$ under $\mu_A$, as the image of 
a compact set under a  continuous  map is closed.

If $w_i>0$ and $s_{i-1}<s_{i}$ for some $i$, then $\mu_A({\bf s}, {\bf w})$ 
has a representing measure whose support includes the interval $(s_{i-1},s_i]$ 
and is therefore not finite. Then by Proposition~\ref{prop:boundaryMoments}, 
${\bf m}$ belongs to the  interior of $M(A)$.  

Suppose the point ${\bf m}$ belongs to $M_k(A)$. Then ${\bf m} = \mu_A({\bf s}, {\bf w})$
for some $({\bf s}, {\bf w})\in P$. Let $I$ denote the collection of indices $1\leq i\leq k$ for which $w_i>0$. 
If ${\bf m}$ belongs to the boundary of $M(A)$, $s_{i-1} = s_i$ for all $i\in I$. 
Then 
\[
{\bf m} = \mu_A({\bf s}, {\bf w}) = \sum_{i\in I} w_i v_A(s_i). 
\]
We can bound ${\rm ind}({\bf m})$ by bounding the number of distinct values 
of $s_i$ that appear. 
For each $i\in I$ with $s_i\in (0,1)$, $s_i$ equals $s_{i-1}$, hence there are  
at least two indices $j$ in $\{1, \hdots, k\}$ for which $s_j = s_i$. 
Trivially, if $s_i\in \{0,1\}$, there is at least one $j\in \{1,
\hdots, k\}$ such that $s_j = s_i$. 
Together, these show that 
\[
{\rm ind}({\bf m}) \ = \  \#\{s_i \in \{0,1\} : i\in I\} + 2\cdot
\bigl(\#\{s_i \in (0,1) : i\in I\}\bigl) \ \leq \ k. \qedhere
\] 
\end{proof}

\begin{lem}\label{lem:BoundaryDim}
The intersection of 
$M_k(A)$ with the Euclidean boundary of $M(A)$ is a semialgebraic set of 
dimension $\leq k$. 
\end{lem}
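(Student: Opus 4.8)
The plan is to use the parametrization from the proof of Lemma~\ref{lem:useful} and count dimensions. Recall that $M_k(A)\cap\{m_0=1\}$ is the image under the polynomial map $\mu_A$ of the polytope $P = \{(\mathbf{s},\mathbf{w}) : 0\le s_1\le\cdots\le s_k\le 1,\ w_i\ge 0,\ \sum w_i = 1\}$, which has dimension $2k$. By the cone structure, it suffices to bound the dimension of $\big(M_k(A)\cap\{m_0=1\}\big)\cap\partial M(A)$ and then add one for the scaling direction. So I want to identify which points of $P$ map into the boundary of $M(A)$ and show that locus has dimension $\le k-1$ (giving dimension $\le k-1$ in the image, hence $\le k$ after reinstating the cone direction).

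First I would argue, exactly as in the proof of Lemma~\ref{lem:useful}, that if $(\mathbf{s},\mathbf{w})\in P$ maps into $\partial M(A)$, then for every index $i$ with $w_i > 0$ we must have $s_{i-1}=s_i$; otherwise the representing measure has an interval in its support, so by Proposition~\ref{prop:boundaryMoments} the image lies in the interior of $M(A)$. Let $P'\subseteq P$ be the set of $(\mathbf{s},\mathbf{w})$ satisfying this closed condition. Since $P'$ is cut out by the polynomial equations $w_i(s_i-s_{i-1})=0$ together with the defining (in)equalities of $P$, it is a semialgebraic set, and $\mu_A(P')\supseteq \big(M_k(A)\cap\{m_0=1\}\big)\cap\partial M(A)$; moreover $\mu_A(P')$ itself consists of finitely-supported moment vectors, so it lies in $\partial M(A)$. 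Thus it is enough to show $\dim P' \le k-1$.

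The key step is the dimension count for $P'$. I would stratify $P'$ by the support set $I = \{i : w_i>0\}\subseteq\{1,\dots,k+1\}$ and, within a stratum, by the partition of $I$ into blocks of consecutive indices sharing a common $s$-value. On the $\mathbf{w}$-side, fixing $|I|=m$ contributes an $(m-1)$-dimensional simplex (the constraint $\sum w_i=1$). On the $\mathbf{s}$-side, the constraints $s_{i-1}=s_i$ for $i\in I$ force the $s_j$ to be constant across each block; if there are $p$ distinct values among the $s_j$ that are "pinned down" this way, the remaining free $s_j$ (those with both $w_i=w_{i+1}=0$ on either side, roughly speaking) number at most $k - (\text{number of pinned coordinates})$. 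The bookkeeping I expect to work out is: each $i\in I$ with $s_i\in(0,1)$ pins two consecutive $s$-coordinates, each $i\in I$ with $s_i\in\{0,1\}$ pins one, and — crucially — the total number of free $\mathbf{s}$-parameters plus free $\mathbf{w}$-parameters is maximized when $I$ is as small as possible and all pinned values are interior, but even then one checks the sum is $\le k-1$ because the single equation $\sum w_i = 1$ always costs one dimension and the $s$-coordinates number only $k$.

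The main obstacle will be making the stratified dimension count fully rigorous and uniform: one must verify that in \emph{every} stratum — including degenerate ones where many $s_j$ coincide at $0$ or $1$, or where $I$ is small so that many $s_j$ are genuinely free — the count of free parameters never exceeds $k-1$. I would handle this by a clean inequality: if $(\mathbf{s},\mathbf{w})\in P'$ and $I=\{i:w_i>0\}$, then the image $\mu_A(\mathbf{s},\mathbf{w})=\sum_{i\in I}w_i v_A(s_i)$ depends only on the distinct values $r_1<\cdots<r_t$ among $\{s_i : i\in I\}$ together with the aggregated weights; the fibre of $\mu_A|_{P'}$ over such a point contains the freedom to choose the remaining $s_j$ arbitrarily (as long as monotonicity holds) and to redistribute $\mathbf{w}$ within blocks, so $\dim\mu_A(P') \le \dim\{\text{valid }(r_1,\dots,r_t;\text{weights})\}$. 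The latter is at most $2t-1$ when all $r_\ell\in(0,1)$ — $t$ for the $r$'s, $t$ for the weights, minus $1$ for normalization — with the count only decreasing when some $r_\ell\in\{0,1\}$; and the requirement that $(\mathbf{s},\mathbf{w})$ came from $P'$ forces (as in Lemma~\ref{lem:useful}) that $2\cdot\#\{r_\ell\in(0,1)\} + \#\{r_\ell\in\{0,1\}\} \le k$, so $2t-1 \le k-1$ when all $r_\ell$ interior, and the boundary cases are even smaller. Adding back the one-dimensional scaling direction gives $\dim\big(M_k(A)\cap\partial M(A)\big)\le k$, as claimed.
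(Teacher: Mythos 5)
Your proposal is correct and takes essentially the same route as the paper: both reduce, via the argument of Lemma~\ref{lem:useful}, to boundary points that are conic combinations of point evaluations $v_A(r)$ with $2\cdot\#\{r\in(0,1)\}+\#\{r\in\{0,1\}\}\leq k$, and then bound the dimension by counting the parameters $(\mathbf{r},\mathbf{w})$ of these finitely many strata (the paper does this directly on the cone, getting $\leq k$; you do it on the slice $\{m_0=1\}$, getting $\leq k-1$, and add back the scaling direction). One small correction: your parenthetical assertion that $\mu_A(P')\subseteq\partial M(A)$ because its points have finitely supported representing measures is false in general (for large $k$ a sum of sufficiently many point masses lies in the interior of $M(A)$), but this claim is never used --- only the containment $\mu_A(P')\supseteq\bigl(M_k(A)\cap\{m_0=1\}\bigr)\cap\partial M(A)$ and the dimension bound on $\mu_A(P')$ are needed, and those are established correctly.
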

\begin{proof} 
By Lemma~\ref{lem:useful}, the intersection of $M_k(A)$ with the Euclidean boundary of $M(A)$
is the set of boundary points of index $\leq k$. We can parametrize this as the union of the semialgebraic sets: 
\[\bigcup_{\sigma\in \{0,1\}^2}\left\{\sum_{j=1}^{\ell} w_j v_{A}(r_j) + w_{\ell+\sigma_1}v_A(0) +w_{\ell+\sigma_1 + \sigma_2}v_A(1) :   {\bf r} \in (0,1)^{\ell}, {\bf w} \in (\R_{>0})^{\ell+\sigma_1 + \sigma_2} \right\},  \]
where in each set, $\ell$ is chosen so that $2\ell  +\sigma_1 + \sigma_2 \leq k$.  
Here we use ${\bf r}$ to denote the vector $(r_j)_j$ and ${\bf w}$ for the vector $(w_j)_j$. 
Note that each set is the image of $(0,1)^n\times  (\R_{>0})^m$ under a polynomial map 
where $n+m\leq k$ and therefore has dimension $\leq k$. 
\end{proof}

\begin{cor}\label{cor:MomentBound2}
If $k< |A|-1$, $ M_k(A)\neq  M(A)$.
\end{cor}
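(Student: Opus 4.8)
The plan is to compare dimensions. The cone $M(A)$ has dimension $|A|$, so its Euclidean boundary $\partial M(A)$ is a semialgebraic set of dimension $|A|-1$. On the other hand, Lemma~\ref{lem:BoundaryDim} tells us that $M_k(A)\cap \partial M(A)$ has dimension at most $k$. If $k<|A|-1$, then $k\leq |A|-2<|A|-1=\dim\partial M(A)$, so $M_k(A)$ cannot contain all of $\partial M(A)$; hence it is a proper subset of $M(A)$.

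Let me spell out the steps. First I would record that $\dim M(A)=|A|$: this holds because $M(A)$ contains the conical hull of $v_A(t)$ for $t$ ranging over $|A|$ distinct points in $(0,1]$ (e.g. $|A|$ distinct values), and these vectors are linearly independent by Proposition~\ref{prop:SchurDetPos} (after the shift of Lemma~\ref{lem:moment_shift} to assume $0\in A$), so $M(A)$ has nonempty interior in $\R^A$. Consequently $\partial M(A)$, the topological boundary of a full-dimensional closed convex cone, is a semialgebraic set of dimension exactly $|A|-1$. Second, apply Lemma~\ref{lem:BoundaryDim}: $M_k(A)\cap\partial M(A)$ is semialgebraic of dimension $\leq k\leq |A|-2$. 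Third, conclude: if $M_k(A)=M(A)$ then $\partial M(A)=M_k(A)\cap\partial M(A)$ would have dimension $\leq |A|-2$, contradicting $\dim\partial M(A)=|A|-1$. Therefore $M_k(A)\subsetneq M(A)$ whenever $k<|A|-1$.

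I do not anticipate a serious obstacle here, since the two dimension inputs are already established (Lemma~\ref{lem:BoundaryDim} for the upper bound, Proposition~\ref{prop:SchurDetPos} for full-dimensionality). The only point requiring a little care is the elementary topological fact that a proper closed convex subset of $\R^{|A|}$ with nonempty interior has boundary of dimension exactly $|A|-1$ — equivalently that $\partial M(A)$ contains a relatively open $(|A|-1)$-dimensional piece; this is standard but worth stating, and it uses that $M(A)\neq \R^{|A|}$, which follows from $M(A)$ being a pointed cone (it sits in the halfspace $m_0\geq 0$, and meets it only at the origin by positivity of the zeroth moment). Combining these observations gives the corollary, and together with Theorem~\ref{thm:MomentBound1} it completes the proof of Main Theorem~\ref{mainthm:StepBound}.
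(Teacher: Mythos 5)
Your argument is correct and is essentially identical to the paper's proof: both compare the dimension bound of Lemma~\ref{lem:BoundaryDim} with the fact that the boundary of the full-dimensional cone $M(A)$ is a hypersurface of dimension $|A|-1$. The extra details you supply (full-dimensionality via Proposition~\ref{prop:SchurDetPos} and pointedness of the cone) are fine and merely make explicit what the paper leaves implicit.
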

\begin{proof}
The cone $M(A)$ is full-dimensional in $\R^{|A|}$, in consequence, the
cone's boundary is a hypersurface of dimension $|A|-1$. 
By Lemma~\ref{lem:BoundaryDim}, the dimension of the intersection of 
$M_k(A)$ with the boundary of $M(A)$ has dimension $\leq k$, so for $k<|A|-1$, 
this cannot be the entire boundary of $M(A)$. 
\end{proof}


\begin{example} Consider $A = \{0,2,5,9\}$. To visualize the 
moment sets $M_k(A)$, we consider their intersections with the 
affine hyperplane $\{m_0=1\}$.  Affine transformations of these intersections 
 are shown in Figure~\ref{fig:C5ex}. 
Note that the step functions with at most one breakpoint and total mass one 
can be written as 
$\lambda  {\bf 1}_{[0,1]} + (1-\lambda) \frac{1}{s}{\bf 1}_{[0,s]}$ or $\lambda  {\bf 1}_{[0,1]} + (1-\lambda) \frac{1}{1-s}{\bf 1}_{(s,1]}$ where $\lambda\in [0,1]$. 
The result is a two-dimensional surface in the plane $\{m_0=1\}$. 
The set  $M_2(A)$ is full-dimensional, but does not fill up all of  $M(A)$. 
As promised by Lemma~\ref{lem:BoundaryDim}, the intersection $M_2(A)$  with 
the boundary of $M(A)$ has dimension $\leq 2$, so its image in $\{m_0=1\}$ has dimension $\leq 1$.  Indeed, we see this intersection is given by the curve parametrized by $(t^2, t^5, t^9)$ for $t \in [0,1]$
and the line segment between its end points $(0,0,0)$ and $(1,1,1)$. 
Finally, by Theorem~\ref{thm:MomentBound1}, $M_3(A)$ is the full cone $M(A)$.
Points on the boundary of $M(A)$ have index $\leq 3$, and so have one of the two forms 
$w_0 v_A(0) + w_r v_A(r)$ or $w_1 v_A(1) + w_r v_A(r)$ where $r\in [0,1]$, $w_0, w_1, w_r\in \R_{\geq 0}$. 
\end{example}

\begin{figure}[h]
\includegraphics[height=1.5in]{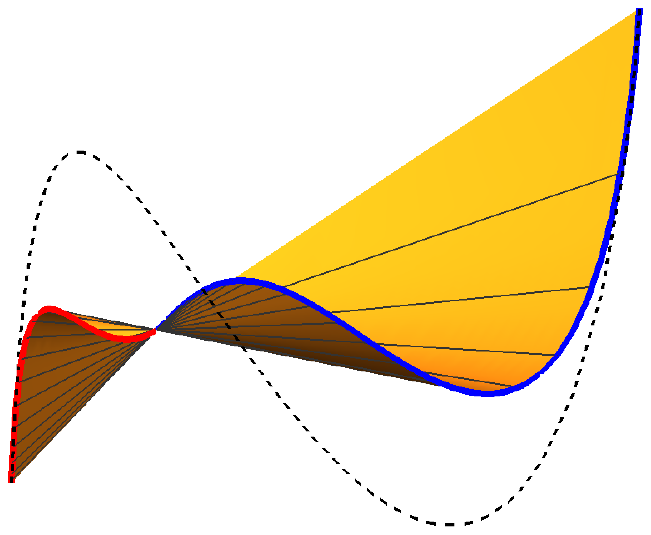} \quad \includegraphics[height=1.5in]{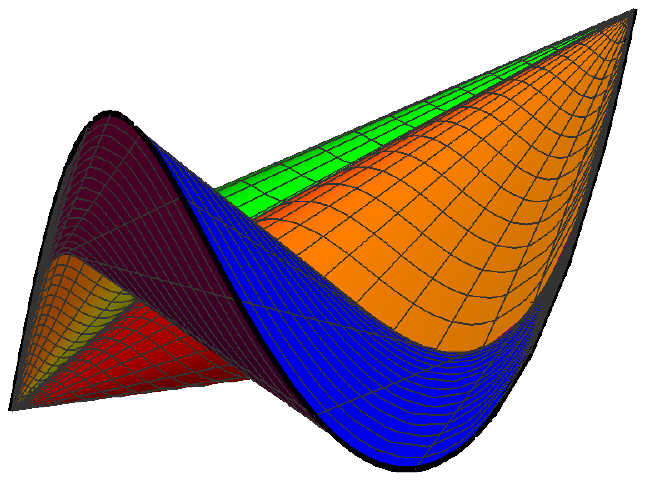} \quad\includegraphics[height=1.5in]{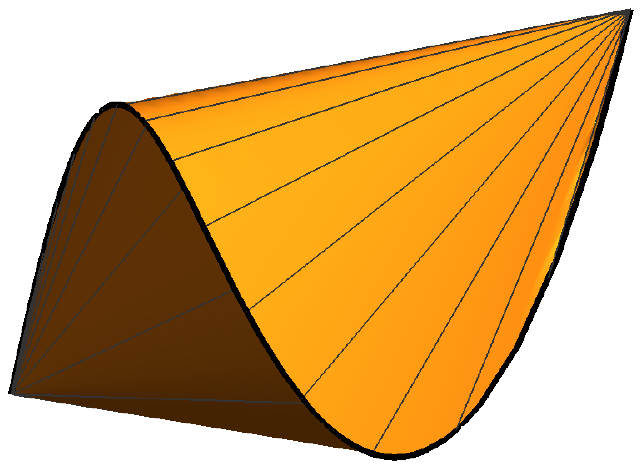}
\caption{The sets $M_1(A)$, $M_2(A)$, $M_3(A)$ in $\{m_0=1\}$ for $A = \{0,2,5,9\}$.}
\label{fig:C5ex}
\end{figure}

\section{Increasing and decreasing step functions}\label{sec:Monotone}

In this section, we study the moment cones of non-negative monotone
functions on the unit interval $[0,1]$. 
We define the increasing and decreasing moment cones
\begin{align*}
M^{\uparrow}(A) & = \overline{\left\{ \left(\int_0^1x^af(x)dx\right)_{a\in A} : f \text{ is nonnegative and {\em increasing} on }[0,1] \right\}} 
\text{ and } \\
M^{\downarrow}(A) &= \overline{\left\{ \left(\int_0^1x^af(x)dx\right)_{a\in A} : f \text{ is nonnegative and {\em decreasing} on }[0,1] \right\}}. 
\end{align*}
Recall that if a function $f:[0,1]\to \R$ is monotone, then it is automatically 
Borel-measurable. 
As in the non-monotone case, all of these moment vectors can be achieved as  
a limit of moments of step functions with a bounded number of steps. 
For $k\in \N$, let $S_k^{\uparrow}$ denote the set of nonnegative, \emph{increasing} step functions on $[0,1]$ with at most $k$ discontinuities. Similarly, let $S_k^{\downarrow}$ denote the analogous set of \emph{decreasing} step functions.  This corresponds to requiring $ y_1\leq y_2\leq \hdots \leq y_{k+1}$ or $y_1\geq y_2\leq \hdots \geq y_{k+1}$ in \eqref{eq:stepFunction}.

Similarly, for finite $A\subset \N$, we consider the $A$-moments of these step functions, 
\[
M_k^{\square}(A) \ =\ \overline{\left\{ \left(\int_0^1x^af(x)dx\right)_{a\in A} : f\in S_k^{\square}\right\}} 
\ \ \text{ for } \ \ 
\square\in \left\{ \uparrow, \downarrow\right\}.
\]
Just as with $M_k(A)$, we see that the set $M_k^{\square}(A)$ is invariant under nonnegative scaling,
$M_k^{\square}(A)\subseteq M_{\ell}^{\square}(A)$ when $k \leq\ell$ 
and $M_{k}^{\square}(A) + M_{\ell}^{\square}(A) \subseteq
M_{k+\ell}^{\square}(A)$.

As in the non-monotone case, we can understand the cones $M^{\square}(A)$ as the conical hull of curve segments. 
\begin{Def}\label{def:curve_steps}  
We define maps $\gamma^{\uparrow}_A$ and $\gamma^{\downarrow}_A$ from $[0,1]$ to $\R^A$ 
where, for $t\in [0,1]$, $\gamma^{\uparrow}_A(t)$ and $\gamma^{\downarrow}_A(t)$
are the $A$-moment vectors of the step functions $(1/(1-t)){\bf 1}_{(t,1]}$ and $(1/t^{ \min(A)+1}){\bf 1}_{[0,t]}$, respectively. For every $a\in A$, the $a$th coordinate of these maps are given by 
\[ \begin{array}{cccrlcrl}
& \left(\gamma^{\uparrow}_A(t)\right)_{a} & = & \dfrac{1}{1-t}&\displaystyle\int_t^1 x^a dx & = &
\dfrac{1}{a+1} & \displaystyle\sum_{i=0}^at^i \\[5mm]  \text{ and } &
\left(\gamma^{\downarrow}_A(t)\right)_a &  = & \dfrac{1}{t^{\min(A)+1}} &\displaystyle\int_0^t x^adx & = & \dfrac{1}{a+1} & t^{\: a-\min(A)}. \end{array}
\]
\end{Def}
We observe that  $\gamma^{\uparrow}_A(0)=\gamma^{\downarrow}_A(1)=
\left(1/(a+1)\right)_{a\in A}$ corresponds to the
  moment vector of constant function ${\bf 1}_{[0,1]}$. 
  The other end points correspond to point masses. Specifically, 
  $\gamma^{\uparrow}_A(1) = v_A(1)$ is the moment vector of a point mass at $t=1$ and $\gamma^{\downarrow}_A(0) = \frac{1}{ \min(A)+1}v_B(0)$ for $B = \{a- \min(A): a\in A\}$ corresponds to a point mass at $t=0$.

   \begin{remark}\label{rem:MonotoneClosure}
     The conical hull over $\{\gamma_A^{\square}(t): t\in [0,1]\}$  is
     closed because this curve is compact and does not contain the origin. 
     Indeed, for $\square = \uparrow$, the $a$th coordinate of
     $\gamma^{\uparrow}_A(t)$ is $\geq (1/a+1)$ for all $t$.
     For $\square = \downarrow$, the $\min(A)$-th coordinate of
     $\gamma^{\downarrow}_A(t)$ is identically $1/(\min(A)+1)$.
   \end{remark}

\begin{lem}\label{lem:MonotoneConicalHull}
  For $\square\in\{\uparrow, \downarrow\}$, the cone $M^{\square}(A)$ equals 
  the conical hull of $\{\gamma_A^{\square}(t):t\in [0,1]\}$.
\end{lem}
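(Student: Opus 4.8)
The plan is to prove the equality $M^{\square}(A) = \con\{\gamma_A^{\square}(t) : t\in[0,1]\}$ by a double inclusion, treating $\square = \downarrow$ and $\square = \uparrow$ in parallel since the argument is symmetric under the substitution $x\mapsto 1-x$ (which swaps increasing and decreasing functions). For the inclusion $\supseteq$: each $\gamma_A^{\square}(t)$ is by definition the $A$-moment vector of a monotone nonnegative step function in $S_1^{\square}\subseteq S_k^{\square}$, hence lies in $M^{\square}(A)$, which is a closed convex cone; therefore it contains the conical hull of all such points. (Remark~\ref{rem:MonotoneClosure} already tells us this conical hull is closed, so no closure operation is needed on that side.)

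For the harder inclusion $\subseteq$, the key observation is that every nonnegative monotone step function, and more generally every nonnegative monotone function, is a ``positive combination'' of the generating functions underlying the curve $\gamma_A^{\square}$. Concretely, for the decreasing case: a nonnegative decreasing step function $f = \sum_{i=1}^{k+1} y_i \mathbf{1}_{(s_{i-1},s_i]}$ with $y_1\geq y_2\geq \cdots\geq y_{k+1}\geq 0$ can be rewritten, by Abel summation, as a nonnegative combination $f = \sum_{i=1}^{k+1} (y_i - y_{i+1})\,\mathbf{1}_{[0,s_i]}$ (with $y_{k+2}:=0$), where each coefficient $y_i - y_{i+1}\geq 0$. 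Each summand $\mathbf{1}_{[0,s_i]}$ has $A$-moment vector a positive scalar multiple of $\gamma_A^{\downarrow}(s_i)$, so the $A$-moment vector of $f$ is a conic combination of points on the curve. Taking $A$-moments is linear, so this shows the moment vector of any $f\in S_k^{\downarrow}$ lies in $\con\{\gamma_A^{\downarrow}(t):t\in[0,1]\}$; since that cone is closed, it also contains the closure, i.e.\ all of $M_k^{\downarrow}(A)$, and taking the union over $k$ gives $M^{\downarrow}(A)$. The increasing case is identical with $\mathbf{1}_{(s_{i-1},1]}$ in place of $\mathbf{1}_{[0,s_i]}$ and the roles of the indices reversed: $f = \sum_i (y_i - y_{i-1})\mathbf{1}_{(s_{i-1},1]}$ with $y_0:=0$ and nonnegative coefficients.

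One should be slightly careful with the definition of $M^{\square}(A)$: it is the closure of moments of \emph{all} nonnegative monotone functions, not just step functions, so strictly speaking I should argue that an arbitrary nonnegative monotone function on $[0,1]$ is a (weak) limit of monotone step functions, or directly that its moment vector is a conic limit of points on the curve. The cleanest route is: an arbitrary bounded nonnegative decreasing $f$ can be approximated uniformly (hence in every $A$-moment) by decreasing step functions — e.g.\ take $f_n = \sum_{j} (\inf_{[(j-1)/n, j/n]} f)\,\mathbf{1}_{((j-1)/n, j/n]}$ — so its moment vector lies in the closure of the conical hull over the curve, which equals the conical hull by Remark~\ref{rem:MonotoneClosure}. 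Unbounded integrable monotone $f$ (which only arises in the decreasing case, with a singularity at $0$) is handled by truncating and noting the extra mass contributes a vector tending to a multiple of $\gamma_A^{\downarrow}(0) = \tfrac{1}{\min(A)+1}v_B(0)$, which is on the curve. The main obstacle I anticipate is not any single step but making this last approximation argument airtight while keeping the bookkeeping with the normalizing factors $1/(1-t)$ and $1/t^{\min(A)+1}$ consistent; the combinatorial heart of the proof — the Abel-summation rewriting of a monotone step function as a nonnegative combination of indicator functions of nested intervals — is elementary.
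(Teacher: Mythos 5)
Your proposal is essentially correct and does take a genuinely different (and more elementary, in some ways) route than the paper's, so it's worth comparing. Your key structural idea is to separate two steps: (a) use Abel summation to rewrite an arbitrary monotone step function as a nonnegative combination of indicator functions of nested half-lines $\mathbf{1}_{[0,s_i]}$ (resp.\ $\mathbf{1}_{(s_{i-1},1]}$), each of which is a scalar multiple of a $\gamma^{\square}_A(s_i)$; then (b) approximate an arbitrary monotone $f$ by monotone step functions. The paper instead approximates $f$ by the single construction $f_n(x) = \tfrac{M}{n}\lfloor \tfrac{n}{M}f(x)\rfloor = \sum_{i=1}^n \tfrac{M}{n}\mathbf{1}_{T_i}$, where $T_i = \{x : f(x) \geq iM/n\}$ are \emph{level sets} of $f$. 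Because a monotone function's level sets are automatically nested half-lines, this single range-partition construction does both jobs at once: the Abel step you perform separately is absorbed into the definition of $f_n$, which is already manifestly a nonnegative combination of $\gamma^{\square}_A$-points. So the paper's route is slicker, but yours is arguably more transparent about \emph{why} the decomposition works.

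There is, however, one genuine (though easily fixed) error in your approximation step (b). You claim that the domain-partition approximant $f_n = \sum_j (\inf_{[(j-1)/n,\,j/n]} f)\,\mathbf{1}_{((j-1)/n,\,j/n]}$ converges to $f$ \emph{uniformly}. It does not in general: monotone functions on $[0,1]$ may be discontinuous, and if $f$ has a jump of size $\delta$ at some interior point $c$, then on the subinterval containing $c$ your $f_n$ has sup-norm error at least $\delta$ for all $n$, so uniform convergence fails. (This is exactly the advantage of partitioning the \emph{range} rather than the domain: the paper's $f_n$ satisfies $\|f - f_n\|_\infty \leq M/n$ unconditionally, jumps or no jumps.) Your argument is rescued by replacing ``uniformly'' with ``pointwise a.e.'': the $f_n$ are bounded by $f(0)$ (resp.\ $f(1)$), converge to $f$ at every continuity point, and a monotone function has only countably many discontinuities, so dominated convergence gives $\int_0^1 x^a f_n\,dx \to \int_0^1 x^a f\,dx$. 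That is all you actually need.

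Finally, the caveat about unbounded integrable $f$ with a singularity at $0$ is superfluous under the paper's definitions: $M^{\downarrow}(A)$ is defined via real-valued functions $f:[0,1]\to\mathbb{R}$, and any decreasing such $f$ satisfies $0\le f\le f(0)<\infty$. Unboundedness never arises, so no truncation argument is required. The Abel-summation identities you write down are correct (the only discrepancy with the paper's convention~\eqref{eq:stepFunction} is at the single point $x=0$ in the increasing case, which is harmless for integrals), and the rest of the bookkeeping with the normalizing factors $1/(1-t)$ and $1/t^{\min(A)+1}$ goes through as you indicate.
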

\begin{proof}
Since $M^{\square}(A)$ is a convex cone containing the point 
$\gamma_A^{\square}(t)$ for all $t$, it automatically contains the conical hull 
of this curve.  

For the other direction, consider a monotone function $f:[0,1]\to \R$. 
  We can  construct  a sequence of step functions 
  $f_n$ converging uniformly to $f$ on $[0,1]$. 
  For example, we may 
  take $f_n = \sum_{i=1}^{n} \frac{M}{n}{\bf 1}_{T_i}$
  where $M\in \{f(0), f(1)\}$ is the maximal value of $f$ on $[0,1]$ and 
  ${\bf 1}_{T_i}$ is the indicator function of  $T_i =  \{x\in [0,1]: f(x)\geq i M/n\}$.  
  That is $f_n(x) = \frac{M}{n} \cdot \lfloor \frac{n}{M} f(x) \rfloor$. 
 Note that $|f_n -f| \leq M/n$ and so $f_n$ converges uniformly to $f$ on $[0,1]$. 
 It follows that for any $a$, $x^a f_n$ converges uniformly to $x^a f$ and so 
 the integral $\int_0^1 x^a f_n(x)dx$ converges to $\int_0^1 x^af(x)dx$.
 
 Note that the set $T_i$ defined above has the form $(s_i,1]$ or $[s_i,1]$ if $f$ is increasing and 
 $[0,s_i]$ or $[0,s_i)$ if $f$ is decreasing for some $s_i\in [0,1]$. 
 The moment vector of $f_n$ therefore is a conic combination 
 of the points $\gamma_A^{\square}(s_i)$ for the appropriate $\square\in \{\uparrow, \downarrow\}$. 
 Taking $n\to \infty$ shows that the moment vector of $f$ 
 belongs to the closure of the conical hull of $\{\gamma_A^{\square}(t):t\in [0,1]\}$. 
 
 Therefore the moment cone 
 $\{ (\int_0^1 x^a f(x)dx)_{a\in A} : f\text{ nonnegative and increasing on }[0,1]\}$ belongs to the closure of the conical hull of $\{\gamma_A^{\uparrow}(t):t\in [0,1]\}$.  By definition, 
 $M^{\uparrow}(A)$ is the closure of this set and so also belongs to the closure of 
 this conical hull. Similarly $M^{\downarrow}(A)$ belongs to the closure of the conical hull of $\{\gamma_A^{\downarrow}(t):t\in [0,1]\}$.
 By Remark~\ref{rem:MonotoneClosure}, both of these conical hulls are 
 already closed.
 \end{proof}

\begin{prop}\label{prop:MonotoneCone}
  If $k\geq \left
    \lfloor{\frac{|A|}{2}}\right \rfloor$, then we have
  $M_k^{\uparrow}(A)=M^{\uparrow}(A)$ and $M_k^{\downarrow}(A)=M^{\downarrow}(A)$.
\end{prop}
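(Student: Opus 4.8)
The plan is to adapt the strategy behind Theorem~\ref{thm:MomentBound1} to the two curves $\gamma_A^{\uparrow}$ and $\gamma_A^{\downarrow}$ in place of the moment curve $v_A$. Throughout, set $n=|A|$. The inclusion $M_k^{\square}(A)\subseteq M^{\square}(A)$ is clear, and $M_{\lfloor n/2\rfloor}^{\square}(A)\subseteq M_k^{\square}(A)$ for $k\geq\lfloor n/2\rfloor$, so it suffices to prove $M^{\square}(A)\subseteq M_{\lfloor n/2\rfloor}^{\square}(A)$. By Lemma~\ref{lem:MonotoneConicalHull}, $M^{\square}(A)$ is the conical hull of the compact curve $\{\gamma_A^{\square}(t):t\in[0,1]\}$, which by Remark~\ref{rem:MonotoneClosure} is already closed; it is also pointed, since every point of the curve has a fixed coordinate that is strictly positive (the $0$-th coordinate for $\square=\uparrow$, the $\min(A)$-th coordinate for $\square=\downarrow$). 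Since $M_{\lfloor n/2\rfloor}^{\square}(A)$ is closed, we may freely pass to limits. I would handle boundary points of $M^{\square}(A)$ first and then reduce the interior case to the boundary case via star-convexity.

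For a point ${\bf m}$ on the boundary of $M^{\square}(A)$, choose a nonzero linear functional $\ell$ supporting the cone at ${\bf m}$ and set $q(t)=\ell(\gamma_A^{\square}(t))$. This is a univariate polynomial, nonnegative on $[0,1]$ (as $\gamma_A^{\square}(t)\in M^{\square}(A)$ for all $t$), and not identically zero because the coordinate functions of $\gamma_A^{\square}$ are linearly independent. Writing ${\bf m}=\sum_j w_j\gamma_A^{\square}(r_j)$ as a finite conic combination of curve points and applying $\ell$, we see that every $r_j$ with $w_j>0$ is a root of $q$ in $[0,1]$. The crux is bounding the number of such roots. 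For $\square=\downarrow$, $q$ is supported on $\{a-\min(A):a\in A\}$, hence has at most $n-1$ sign changes and, by Descartes' rule of signs, at most $n-1$ positive roots counted with multiplicity; moreover, if $0$ is a root of $q$ then its constant coefficient vanishes, so $q$ has at most $n-1$ nonzero terms and at most $n-2$ positive roots. For $\square=\uparrow$, the coefficient of $t^j$ in $q$ equals $\sum_{a\in A,\ a\geq j}\ell_a/(a+1)$, so the coefficient sequence of $q$ is constant on the blocks of indices $\{a_{k-1}+1,\dots,a_k\}$ cut out by $A=\{a_1<\cdots<a_n\}$; it therefore has at most $n-1$ sign changes, and again at most $n-1$ positive roots with multiplicity. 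In either case, let $i$ be the number of $r_j$ in $(0,1)$ — each of even multiplicity $\geq 2$, since $q\geq 0$ has a zero minimum there — and let $b_0,b_1\in\{0,1\}$ record whether $r_j=0$ or $r_j=1$ occurs.

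Now I would translate the representation back into step functions. A conic combination of the $i$ interior curve points is the moment vector of a monotone step function with exactly those $i$ breakpoints, hence lies in $M_i^{\square}(A)$. The point $\gamma_A^{\uparrow}(0)=\gamma_A^{\downarrow}(1)$ is the moment vector of the constant function and lies in $M_0^{\square}(A)$, contributing no breakpoints. The points $\gamma_A^{\uparrow}(1)=v_A(1)$ and $\gamma_A^{\downarrow}(0)$ are endpoint point masses, each a limit of moments of monotone step functions with one breakpoint, hence each lies in $M_1^{\square}(A)$. Using $M_{k}^{\square}(A)+M_{\ell}^{\square}(A)\subseteq M_{k+\ell}^{\square}(A)$, we get ${\bf m}\in M_{i+b}^{\square}(A)$, where $b=b_1$ for $\square=\uparrow$ and $b=b_0$ for $\square=\downarrow$ counts the point-mass endpoints actually used (the root at the other endpoint, if present, only adds the constant). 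The root counts give $2i+b\leq n-1$ when $b=0$, and $2i\leq n-2$ when $b=1$; in both cases a short computation gives $i+b\leq\lfloor n/2\rfloor$, so ${\bf m}\in M_{\lfloor n/2\rfloor}^{\square}(A)$.

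Finally, for ${\bf m}$ in the interior of $M^{\square}(A)$, I would run the star-convexity argument from the proof of Theorem~\ref{thm:MomentBound1}: with ${\bf c}$ the moment vector of the constant function, which lies in $M_0^{\square}(A)$, there is a largest $\lambda^*\geq 0$ with ${\bf m}-\lambda^*{\bf c}\in M^{\square}(A)$ (finite since the cone is closed and pointed), and ${\bf m}-\lambda^*{\bf c}$ lies on the boundary; then ${\bf m}=({\bf m}-\lambda^*{\bf c})+\lambda^*{\bf c}\in M_{\lfloor n/2\rfloor}^{\square}(A)+M_0^{\square}(A)=M_{\lfloor n/2\rfloor}^{\square}(A)$. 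I expect the main obstacle to be the root-counting in the increasing case: one must argue carefully that the block-constant structure of the coefficient sequence of $q$ — rather than an arbitrary sparsity pattern on $A$ — still bounds the sign changes by $n-1$, and then keep the endpoint bookkeeping tight enough that the final count lands exactly at $\lfloor n/2\rfloor$ in every case.
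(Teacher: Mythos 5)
Your proof is correct and follows essentially the same strategy as the paper: pick a supporting functional at a boundary point, apply Descartes' rule of signs to the polynomial $\ell \circ \gamma_A^{\square}$, use even multiplicity at interior roots to get the factor of two, and reduce the interior case to the boundary case by star-convexity with respect to the moment vector of the constant function. The block-constant coefficient structure in the increasing case, which you flag as the likely obstacle, is handled exactly as you describe --- $n$ constant blocks give at most $n-1$ sign changes --- and that is precisely the observation the paper's argument hinges on, stated there more tersely.
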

\begin{proof} Our proof proceeds similarly to that of Theorem~\ref{thm:MomentBound1}.
Let ${\bf m}$ be a point of the boundary of $M^{\square}(A)$. We want to
  express ${\bf m}$ as the $A$-moment of an increasing step function of the
  fewest steps possible. 
  Let $\ell:\R^A\to \R$ define a supporting hyperplane of $M^{\square}(A)$ at ${\bf m}$, 
  so that $\ell\geq 0$ on $M^{\square}(A)$ and $\ell({\bf m}) = 0$. 
  By Lemma~\ref{lem:MonotoneConicalHull}, $M^{\square}(A)$ is the conical hull of a curve, 
  hence ${\bf m}$ will lie in the conical hull of points on this curve with $\ell=0$. 
 We use this to show that ${\bf m}$ belongs to $M_k^{\square}(A)$  for $k\geq \left \lfloor{\frac{|A|}{2}}\right \rfloor$. \\

  ($\downarrow$) Let  
  $p(x) =\ell\left(\gamma^{\downarrow}_A(x)\right) =\displaystyle\sum_{a\in A} \frac{p_a}{a+1}x^{a-\min(A)} $. 
  The polynomial $p$ is nonnegative on $[0,1]$.  By Descartes' rule of signs, $p$ has at most $|A|-1$ positive roots,
  counting multiplicity, and if $p_{\min(A)}=0$, then it has at most $|A|-2$. 
  Let $i$ denote the number of distinct roots of $p$ in $(0,1)$ and $b=1$ if $p(0)=0$ and $0$ otherwise. 
  Since each interior root of $p$ must have multiplicity $\geq 2$, this gives $2i+b\leq |A|-1$. 
  Note that $\gamma^{\downarrow}_A(t)\in M^{\downarrow}_1(A)$ for all $t\in [0,1)$ and belongs to
  $M^{\downarrow}_0(A)$ for $t=1$.  Therefore ${\bf m}$ belongs to $M^{\downarrow}_{k}(A)$
  for $k= i+b \leq \frac{1}{2}(|A|-1+b)$.  The bound follows from the integrality of $i+b$ and $b\in \{0,1\}$.

  ($\uparrow$) Let  
  $p(x) =\ell\left(\gamma^{\uparrow}_A(x)\right) = \displaystyle\sum_{a\in A}\frac{p_a}{a+1}\displaystyle\sum_{i=0}^ax^i$, which is a polynomial nonnegative on $[0,1]$.
  Again, by Descartes' rule of signs, $p$ has at most $|A|-1$ positive roots,
  counting multiplicity. 
  If $i$ is the number of distinct roots of $p$ in $(0,1)$ and $b=0$ if $p(1)=0$ and $0$ otherwise, this 
  gives that  $2i+b\leq |A|-1$.  As before, 
  $\gamma^{\uparrow}_A(t)\in M^{\uparrow}_1(A)$ for all $t\in (0,1]$ and belongs to
  $M^{\uparrow}_0(A)$ for $t=0$.  Therefore ${\bf m}$ belongs to $M^{\downarrow}_{k}(A)$
  for $k= i+b \leq \frac{1}{2}(|A|-1+b)\leq \frac{1}{2}|A|$.

  Now consider ${\bf m}$ in the interior of $M^{\square}(A)$ and let ${\bf c}$ be the
  moment vector of the constant function ${\bf 1}_{[0,1]}$. Let $\lambda^*$ be the maximum
  value of $\lambda\in \R$ for which ${\bf m}-\lambda {\bf c}$ belongs to
  $M^{\square}(A)$. Since ${\bf m}\in M^{\square}(A)$, we know that
  $\lambda^*\geq 0$, and for sufficiently large $\lambda$, ${\bf m}-\lambda
  {\bf c}\notin M^{\square}(A).$ Thus ${\bf m}-\lambda^* {\bf c}$ belongs to the
  boundary of $M^{\square}(A)$, which is equal to the boundary of
  $M_k^{\square}(A)$ by the argument above. Hence, ${\bf m}$ also belongs to $M_k^{\square}(A)$.
\end{proof}

\begin{prop}\label{prop:monotoneLowerBound}
  For all $k< \left\lfloor{\frac{|A|}{2}}\right \rfloor$, the cone
$M_k^{\square}(A)$ is a proper subset of $M^{\square}(A)$.
\end{prop}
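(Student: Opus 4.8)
\noindent\emph{Proof plan.}
The plan is to adapt the index-based argument of Lemmas~\ref{lem:useful}--\ref{lem:BoundaryDim} and Corollary~\ref{cor:MomentBound2} to the monotone setting, but with one genuine new ingredient: a naive dimension count is not quite enough, since the set of boundary points of $M^{\square}(A)$ reachable from $S_k^{\square}$ can have dimension as large as $2k+1$, so for $|A|$ even this fails to beat $\dim\partial M^{\square}(A)=|A|-1$ exactly at the critical value $k=\lfloor|A|/2\rfloor-1$. We therefore must exhibit an explicit extremal boundary point.

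First I would set up the structural analogue of Section~\ref{sec:momentIntro}. By Lemma~\ref{lem:MonotoneConicalHull}, $M^{\square}(A)$ is the conical hull of $\{\gamma^{\square}_A(t):t\in[0,1]\}$, and the polynomials $t\mapsto \ell(\gamma^{\square}_A(t))$, as $\ell$ ranges over linear functionals, form an $|A|$-dimensional space in which, by the sign-change count in the proof of Proposition~\ref{prop:MonotoneCone} together with Descartes' rule, every nonzero element has at most $|A|-1$ zeros in $(0,\infty)$, and at most $|A|-2$ if it also vanishes at $0$. As in Proposition~\ref{prop:SchurDetPos} and Corollary~\ref{cor:simplicial}, this puts the points $\gamma^{\square}_A(r)$ in general position, so every proper face of $M^{\square}(A)$ is simplicial and every boundary point ${\bf m}$ has a unique representation $\sum_j w_j\gamma^{\square}_A(r_j)$ with $w_j>0$; define the index ${\rm ind}({\bf m})$ to be the number of these $r_j$ in $[0,1)$ for $\square=\downarrow$, and in $(0,1]$ for $\square=\uparrow$. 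Since $\gamma^{\downarrow}_A(t)\in M^{\downarrow}_1(A)$ for $t<1$ while $\gamma^{\downarrow}_A(1)\in M^{\downarrow}_0(A)$ (and symmetrically for $\uparrow$), this index counts exactly the breakpoints the representation forces. Mimicking Lemma~\ref{lem:useful} --- realizing $M^{\square}_k(A)$, after normalizing a fixed coordinate sum, as the image of the compact parameter polytope of step functions in $S_k^{\square}$, and observing that such a step function has moment vector a conic combination of $\gamma^{\square}_A(0)$ (resp.\ $\gamma^{\square}_A(1)$) together with at most $k$ other curve points, all of which must lie on the exposing hyperplane once ${\bf m}$ is on $\partial M^{\square}(A)$ --- yields: if ${\bf m}\in M^{\square}_k(A)\cap\partial M^{\square}(A)$ then ${\rm ind}({\bf m})\le k$.

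It remains to exhibit one boundary point of $M^{\square}(A)$ of index $\lfloor|A|/2\rfloor$; by the above this is the same as finding a polynomial $p$ in the relevant $|A|$-dimensional space, nonnegative on $[0,1]$, whose zeros in $[0,1)$ (resp.\ $(0,1]$) number $\lfloor|A|/2\rfloor$, with interior zeros automatically double. For $\square=\downarrow$ I would factor out a power of $x$ (producing a zero at $0$) and, if $|A|$ is even, discard the top exponent, reducing to: for any finite $C\subset\N$ of size $\ell$ there is a polynomial supported on $C$, nonnegative on $[0,1]$, with $\lfloor(\ell-1)/2\rfloor$ double roots in $(0,1)$. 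For $\ell$ odd (the case one is reduced to) this follows by Descartes saturation: take any nonzero $p$ in the $\ell$-dimensional span of $\{x^c:c\in C\}$ with prescribed double roots $r_1<\dots<r_{(\ell-1)/2}$ in $(0,1)$ (that is $\ell-1$ homogeneous linear conditions), note that it has $\ell-1$ positive roots with multiplicity so Descartes' bound is met with equality, forcing all $\ell$ coefficients of $p$ to be nonzero and to alternate in sign, and conclude (normalizing the lowest-degree coefficient positive) that $p>0$ on $[0,\infty)$ away from the $r_i$. The increasing case is symmetric, now saturating Descartes with $\lfloor|A|/2\rfloor$ interior double roots ($|A|$ odd) or with $\lfloor|A|/2\rfloor-1$ interior double roots plus a simple root at $1$ ($|A|$ even). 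A point in the relative interior of the simplicial face cut out by such a $p$ then has a unique representation using exactly these $\lfloor|A|/2\rfloor$ curve points, none of which is $\gamma^{\square}_A(0)$ (resp.\ $\gamma^{\square}_A(1)$), so it cannot lie in $M^{\square}_k(A)$ for any $k<\lfloor|A|/2\rfloor$.

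The hard part is this last construction. For $A$ a block of consecutive integers one may simply use $p(x)=x\prod_i(x-r_i)^2$, but for a sparse $A$ a polynomial with the desired double roots need not be nonnegative --- the role of the Descartes-saturation trick is precisely to extract nonnegativity from the combinatorics of the support --- and, as noted, the clean dimension argument behind Corollary~\ref{cor:MomentBound2} genuinely breaks down when $|A|$ is even, so the explicit extremal point cannot be bypassed.
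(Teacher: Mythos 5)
Your proposal is correct in outline, but it takes a substantially longer route than the paper, and the reason is a misdiagnosis of what the ``naive dimension count'' is.

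You compare the dimension of the slice $M_k^{\square}(A)\cap\partial M^{\square}(A)$ (bounded by $2k+1$) against $\dim\partial M^{\square}(A)=|A|-1$, observe a tie at the critical $k=\lfloor|A|/2\rfloor-1$ when $|A|$ is even, and conclude that an explicit extremal boundary point must be exhibited. That diagnosis would be apt if the monotone result had to be proved the way Corollary~\ref{cor:MomentBound2} is, by restricting attention to the boundary. But there is a cruder count available: the \emph{entire} cone $M_k^{\square}(A)$, not just its intersection with the boundary, has dimension at most $2k+1$. Any $f\in S_k^{\square}$ decomposes as a nonnegative combination of ${\bf 1}_{[0,1]}$ and $k$ one-sided indicators, so its $A$-moment vector is a conic combination of the single fixed point $\gamma^{\uparrow}_A(0)=\gamma^{\downarrow}_A(1)$ and of $\gamma^{\square}_A(s_1),\dots,\gamma^{\square}_A(s_k)$; the $k$ breakpoints and $k+1$ nonnegative weights parametrize $M_k^{\square}(A)$ by $2k+1$ real parameters, hence $\dim M_k^{\square}(A)\le 2k+1$. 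For $k\le\lfloor|A|/2\rfloor-1$ this is at most $|A|-1<|A|=\dim M^{\square}(A)$ for either parity of $|A|$, and that is the whole proof. This cruder count is unavailable for Corollary~\ref{cor:MomentBound2} only because $M_k(A)$ already becomes full-dimensional at roughly $k=|A|/2$, far below the threshold $|A|-1$, which is what forces the boundary analysis there; in the monotone setting the two thresholds coincide. Your index bookkeeping and Descartes-saturation construction look sound and would serve to produce a concrete boundary witness missed by $M_k^{\square}(A)$ --- genuinely extra information --- but they are not needed for the proposition as stated.
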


\begin{proof}
  The cone $M_k^{\square}(A)\subset \R^{|A|}$ is a conic combination of $k$ points
  on the boundary curve $\gamma^{\square}_A$, each contributing two
  degrees of freedom, and the point corresponding to the image of the constant
  step function
  $\gamma^{\uparrow}_A(0)=\gamma^{\downarrow}_A(1)$, contributing a single
  degree of freedom. Therefore, the semialgebraic set $M_k^{\square}(A)$ has dimension at most
  $\min\left\{2k+1, |A|\right\}$. 
  The cone $M^{\square}(A)$ is full-dimensional in $\R^{|A|}$.
  Let $n = \lfloor |A|/2\rfloor$ so that $|A|$ is $2n$ or $2n+1$. 
   In either case, we observe that for $k\leq n-1$, the dimension
 of $M_k^{\square}(A)$ is less than or equal to $2n-1$, hence it cannot
 fill up all of $M^{\square}(A)$.
\end{proof}

\begin{example}
For $A=\{0,2,5,9\}$, $M_1(A)$ is a union of $M^{\uparrow}_1(A)$ and $M^{\downarrow}_1(A)$,
shown on the left in Figure~\ref{fig:C5ex}. Since $1<  2 = \lfloor |A|/2 \rfloor$, 
these sets are not full dimensional and so cannot fill up $M^{\uparrow}(A)$ or $M^{\downarrow}(A)$. 
For $k=2 =\lfloor |A|/2 \rfloor$,  $M_2^{\uparrow}(A) = M^{\uparrow}(A)$ and $M_2^{\downarrow}(A) = M^{\downarrow}(A)$. These form parts of the full dimensional set $M_2(A)$ shown in the middle of Figure~\ref{fig:C5ex}.
\end{example}

\section{Connection with coalescence manifold}\label{sec:coalManifold}

The motivation for studying moments of step functions comes from the
field of {\bf population genetics}. A central problem in this area is:
\begin{question}
  Given a sample of $n$ genomes from a present-day population, what
  inferences can be drawn regarding the history of that population?
\end{question}

Our approach to the problem is to fix a function $\pop(t)$ describing
{\em effective population size} at time $t$ before the present.
We then compute, as a function of $\pop$, a vector of invariants
${\bf c}$ associated to the genome sample. Understanding the relationship
between $\pop$ and ${\bf c}$ will allow
us to infer likely values of $\pop$ based on measured data.

Following \cite{BS}, we model the natural process
of the production of a sample of $n$ genomes as follows:

\begin{itemize}
\item The genealogical tree connecting $n$ individuals will be
  formed by taking coalescence of each pair of lineages as a Poisson point process
  with rate parameter $1/\pop(t)$, where
$\pop(t)$ is the effective population size at time $t$ before present. (Heuristically, looking
at the previous generation and picking parents at random, there is a $1/\pop(t)$ chance
that two lineages will pick the same parent.)
\item After the tree is specified, mutations are distributed on the tree as a Poisson
  point process with constant rate relative to branch length. The infinite-sites model
  is used, so that repeated mutation at a given site is disallowed, 
  which is a good model for large genomes.
  \end{itemize}

\begin{Def}
  Fixing a population history, and defining the random process as above, we define
  random variables:
  \begin{itemize}
  \item The {\em sample frequency spectrum} (also known as the {\em site} or {\em allele frequency spectrum}), abbreviated SFS, is the vector of random variables $(X_{n,b})_{b = 1,\ldots,n-1}$ where $X_{n,b}$ denotes
    the number of mutations that are shared by exactly $b$ out of the $n$ individuals.
  \item The {\em coalescence vector} is the vector $(T_{i,i})_{i=2,\ldots,n-1}$ of the time
    at which a sample of size $i$ has exactly $i$ distinct lineages, i.e. the time until the
    first coalescence.
  \end{itemize}
For a fixed population function $\pop$,  taking expectations gives the population invariants
  $\xi_{n,b} = \mathbb{E}[X_{n,b}]$ and $c_i = \mathbb{E}[T_{i,i}]$.
\end{Def}
\noindent In practice, the SFS is more frequently discussed as a summary statistic, but the
coalescence vector is simpler to use in computations. Fortunately, Polanski and Kimmel \cite{KP}
proved that they are related by a linear transformation $A_n$, a matrix entirely determined
by sample size $n$. Therefore, we focus on the coalescence vectors $(c_i)$.

\begin{fact}
  We make the reasonable assumption that $\pop(t)$ is bounded below by $0$ and
  bounded above by a fixed $P$.
  Applying integration by parts and change of variables to the expected value
  of an exponential distribution yields the
  following expression for $c_i$ in terms of $\pop(t)$:
  \begin{equation}
    c_i(\pop) = \int_0^\infty \tilde{\pop}(\tau) \exp\left[-\binom{i}{2} \tau\right] \mathrm{d} \tau, \label{eq:c}
  \end{equation}
 where $\tilde{\pop}(\tau) = \pop(R_{\pop}^{-1}(\tau))$ and $R_{\pop}(t) = \int_0^t \frac{1}{\pop(x)}
 dx$. Because $0 < \pop (t) < P$, the function $R_{\pop}$ is strictly increasing and unbounded;
 thus, it is a bijection from $\mathbb{R}_{\geq 0} \to \mathbb{R}_{\geq 0}$, so the inverse is well-defined. We call $\tilde{\pop}(\tau)$ the {\em transformed population history}.
\end{fact}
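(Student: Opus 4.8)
The plan is to derive \eqref{eq:c} directly from the probabilistic model described in the two displays above, in three moves: realize $T_{i,i}$ as the first arrival time of an inhomogeneous Poisson process, rewrite its expectation as the integral of its survival function (this is the ``integration by parts'' alluded to), and finally apply the time change $\tau = R_{\pop}(t)$.

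First I would observe that when $i$ lineages are present, the $\binom{i}{2}$ pairwise coalescence processes are independent Poisson point processes, each of rate $1/\pop(t)$; their superposition is therefore an inhomogeneous Poisson process of rate $\binom{i}{2}/\pop(t)$, and $T_{i,i}$, the time until the first coalescence, is its first arrival time. The void-probability formula for a Poisson process then gives
\[
\mathbb{P}(T_{i,i} > t) \;=\; \exp\!\left(-\int_0^t \frac{\binom{i}{2}}{\pop(x)}\,dx\right) \;=\; \exp\!\left(-\binom{i}{2} R_{\pop}(t)\right),
\]
with $R_{\pop}(t) = \int_0^t dx/\pop(x)$ as in the statement. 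Since $T_{i,i}\geq 0$, the tail formula $\mathbb{E}[T_{i,i}] = \int_0^\infty \mathbb{P}(T_{i,i}>t)\,dt$ — proved by integrating by parts in $\mathbb{E}[T_{i,i}] = \int_0^\infty t\,dF(t)$, or by Fubini — yields
\[
c_i(\pop) \;=\; \int_0^\infty \exp\!\left(-\binom{i}{2} R_{\pop}(t)\right)dt.
\]

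Next I would use the hypothesis $0<\pop(t)<P$: it makes $1/\pop$ strictly positive and bounded below by $1/P$, so $R_{\pop}$ is strictly increasing with $R_{\pop}(t)\geq t/P$. Hence $R_{\pop}\colon \mathbb{R}_{\geq 0}\to \mathbb{R}_{\geq 0}$ is a bijection (so that $\tilde{\pop} = \pop\circ R_{\pop}^{-1}$ is well-defined) and the integrand above decays at least like $e^{-\binom{i}{2}t/P}$, so the integral converges. Substituting $\tau = R_{\pop}(t)$, so that $d\tau = dt/\pop(t)$ and therefore $dt = \pop(R_{\pop}^{-1}(\tau))\,d\tau = \tilde{\pop}(\tau)\,d\tau$, and noting that $\tau$ sweeps out all of $[0,\infty)$ as $t$ does, transforms the last integral into $\int_0^\infty \tilde{\pop}(\tau)\,\exp(-\binom{i}{2}\tau)\,d\tau$, which is exactly \eqref{eq:c}.

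I do not expect a serious obstacle; the argument is bookkeeping once the coalescent is set up as in \cite{BS}. The only points requiring care are measure-theoretic: the superposition and void-probability statements must be applied to a merely bounded measurable $\pop$ rather than a continuous one, and the change of variables $\tau = R_{\pop}(t)$ must be justified at that level of regularity. Both are standard — $R_{\pop}$ is absolutely continuous, being the Lebesgue integral of an $L^\infty$ function, which legitimizes the substitution, and the same regularity supports the Poisson description. A minor wrinkle is that $\tilde{\pop}$ depends on the chosen representative of $\pop$, but the value of the integral, and hence $c_i$, does not.
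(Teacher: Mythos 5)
Your derivation is correct and is precisely the argument the paper compresses into its one-line sketch: the ``expected value of an exponential distribution'' is $\mathbb{E}[T_{i,i}]$ for the time-changed exponential with survival function $\exp\bigl(-\binom{i}{2}R_{\pop}(t)\bigr)$, the ``integration by parts'' is your tail formula, and the ``change of variables'' is $\tau = R_{\pop}(t)$. Your added care about convergence (via $R_{\pop}(t)\geq t/P$) and the measurability/absolute-continuity of $R_{\pop}$ fills in details the paper leaves implicit, but the route is the same.
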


The coalescence vector can thus be considered a function from the space of (bounded)
population history functions to $\mathbb{R}^{n-1}$. Since the former space is
infinite-dimensional and the latter is finite-dimensional, it is
natural to restrict
our attention to a finite-dimensional space of population history functions. A common
choice for this, motivated by injectivity considerations in \cite{BS}, 
is \[\tilde{S}_k = \{ \text{nonnegative step functions on  }\mathbb{R}_{\geq 0} \text{  with at most $k$
  breakpoints}\}.\]

\begin{Def}
  Let $n,k$ be integers with $ n \geq 2$ and $k \geq 0$. 
  The {\em coalescence manifold} $\mathcal{C}_{n,k}$ is the Euclidean closure of
  the set of vectors ${\bf \tilde{c}}(\pop) = {\bf c}(\pop)/||{\bf c}(\pop)||_1$ for all
  $\pop \in \tilde{S}_k$. Here, ${\bf c}(\pop) = (c_2(\pop),\ldots,c_n(\pop))$ where $c_i(\pop)$ is defined as in Equation~\ref{eq:c}.
  \label{def:coal}
\end{Def}
\noindent Because the vectors are normalized to
have sum one, the coalescence manifold lives in the simplex $\Delta^{n-1}$.
Note that this definition deviates slightly
from the definition in \cite{BRS} by allowing $k$ breakpoints instead of $k$ epochs
(i.e. constant intervals). This shifts the index down by one. We now connect
back to the moment cones studied above.

\begin{thm} \label{thm:coal}
Let $A = \{\binom{i}{2} - 1 : i=2, \hdots, n\}$.  
The coalescence manifold $\mathcal{C}_{n,k}$ equals the intersection of the 
cone $M_k(A)$ with the affine hyperplane of points with coordinate sum 
equal to one: 
\[\mathcal{C}_{n,k}  \ =\  \left\{{\bf m}\in M_{k}(A) \  :  \ \sum_{a\in A} m_a = 1\right\}.\]
\end{thm}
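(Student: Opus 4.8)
The plan is to relate the integral expression \eqref{eq:c} for the coalescence vector to a moment of a step function on $[0,1]$ via a change of variables, and then to match up the relevant step-function classes on both sides. Given $\pop\in\tilde S_k$, the transformed population history $\tilde\pop(\tau)$ is again a nonnegative step function on $\R_{\geq 0}$ with at most $k$ breakpoints (the map $\pop\mapsto\tilde\pop$ changes only the locations of the breakpoints, not their number, since $R_\pop$ is a strictly increasing bijection of $\R_{\geq 0}$ that is piecewise linear). So it suffices to understand the image of the map
\[
\tilde\pop \ \longmapsto \ \left(\int_0^\infty \tilde\pop(\tau)\,e^{-\binom{i}{2}\tau}\,d\tau\right)_{i=2,\dots,n},
\]
as $\tilde\pop$ ranges over nonnegative step functions on $\R_{\geq 0}$ with at most $k$ breakpoints.

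The key step is the substitution $x = e^{-\tau}$, so that $\tau\in[0,\infty)$ corresponds to $x\in(0,1]$, $d\tau = -dx/x$, and $e^{-\binom{i}{2}\tau} = x^{\binom{i}{2}}$. Writing $g(x) = \tilde\pop(-\ln x)/x$, which is a nonnegative step function on $(0,1]$, the integral becomes $\int_0^1 x^{\binom{i}{2}-1} g(x)\,dx$, i.e.\ exactly the $a$th $A$-moment of $g$ for $a = \binom{i}{2}-1$. I would check that the correspondence $\tilde\pop\leftrightarrow g$ is a bijection between nonnegative step functions on $\R_{\geq 0}$ with $\leq k$ breakpoints and nonnegative step functions on $(0,1]$ with $\leq k$ breakpoints: a breakpoint of $\tilde\pop$ at $\tau = t$ corresponds to a breakpoint of $g$ at $x = e^{-t}\in(0,1)$, the breakpoint "at $\tau=\infty$" disappears, and $g$ has no forced breakpoint at $x=0$ or $x=1$. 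The one subtlety is integrability near $x=0$: when $\tilde\pop$ is eventually constant equal to its value $y_{k+1}$ on $(s_k,\infty)$, the tail contributes $\int_0^{e^{-s_k}} y_{k+1}\,x^{\binom{i}{2}-1}\,dx$, which converges since $\binom{i}{2}-1\geq 0$ for $i\geq 2$; this is where the choice $A=\{\binom i2-1\}$ (rather than $\{\binom i2\}$) and the fact that $\min(A)=0$ matter.

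From here the argument is bookkeeping. The set of $A$-moment vectors $(\int_0^1 x^a g(x)\,dx)_{a\in A}$ of nonnegative step functions $g$ with $\leq k$ breakpoints on $(0,1]$ — with its Euclidean closure taken — is by definition $M_k(A)$ (a single breakpoint at $x=0$ vs.\ $x=0^+$ is immaterial after closure, and the half-open versus closed conventions in \eqref{eq:stepFunction} do not change the integral). Normalizing by the $\ell^1$-norm of the coalescence vector corresponds to scaling so that $\sum_{a\in A} m_a = 1$; since every nonzero vector in $M_k(A)$ has strictly positive coordinate sum (the coordinate $m_{\min(A)} = m_0 = \int_0^1 g(x)\,dx$ is positive for $g\not\equiv 0$, and all coordinates are nonnegative), this normalization is well-defined on all of $M_k(A)\setminus\{0\}$ and its image is precisely the affine slice $\{{\bf m}\in M_k(A):\sum_a m_a = 1\}$. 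Taking closures commutes with this continuous normalization on the pointed cone $M_k(A)$, so $\mathcal C_{n,k}$, defined as the closure of the normalized image, equals the closure of the normalized image of step-function moments, which is the stated slice of $M_k(A)$.

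The main obstacle is the careful verification that the breakpoint count is exactly preserved under both the $\pop\mapsto\tilde\pop$ reparametrization and the $x=e^{-\tau}$ substitution, including the boundary behavior: one must confirm that no breakpoints are created or lost, that the "breakpoint at infinity" of a population history genuinely corresponds to \emph{no} breakpoint at $x=0$ (so that $\tilde S_k$ maps onto $S_k$ and not $S_{k+1}$ or $S_{k-1}$), and that the tail integrability holds for every $a\in A$. Everything else — the change of variables, the identification with $M_k(A)$, and the normalization — is routine once that correspondence is pinned down.
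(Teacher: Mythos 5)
Your overall strategy --- the substitution $x = e^{-\tau}$, matching step-function classes, and normalizing --- is the paper's approach, but there is one genuine gap plus a computational slip worth noting.

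The slip: you set $g(x) = \tilde\pop(-\ln x)/x$, but with that extra $1/x$ factor $g$ is not a step function, and $\int_0^1 x^{\binom{i}{2}-1} g(x)\,dx$ evaluates to $\int_0^\infty \tilde\pop(\tau)\,e^{-(\binom{i}{2}-1)\tau}\,d\tau$, not to $c_i$. The correct substitution gives $g(x)=\tilde\pop(-\ln x)$ with no extra factor; your downstream claims (that $g$ is a step function, that the tail $\int_0^{e^{-s_k}} y_{k+1}\,x^{\binom{i}{2}-1}\,dx$ converges) are consistent only with this corrected $g$, so I read the $/x$ as a typo rather than a conceptual error.

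The gap: having shown $\pop\in\tilde{S}_k\Rightarrow\tilde\pop\in\tilde{S}_k$, you conclude that ``it suffices to understand the image of the map $\dots$ as $\tilde\pop$ ranges over nonnegative step functions on $\R_{\geq 0}$ with at most $k$ breakpoints.'' That conclusion requires $\pop\mapsto\tilde\pop$ to be \emph{surjective} onto $\tilde{S}_k$ (at least up to closure), and you have only verified that its image lands inside $\tilde{S}_k$. Surjectivity is genuinely nontrivial here: the relation $\tilde\pop(\tau)=\pop(R_\pop^{-1}(\tau))$ involves $\pop$ both directly and through $R_\pop^{-1}$, which itself depends on $\pop$, so inverting it requires an explicit construction. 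This is exactly the content of the paper's Lemma~\ref{lem:InverseTransform}: for a strictly positive step function $q\in\tilde{S}_k$, define $Q(t)=\int_0^t q(x)\,dx$ and set $\pop(t)=q(Q^{-1}(t))$; one then checks $R_\pop(Q(t))=t$, hence $\tilde\pop=q$, and the strict-positivity restriction is removed by passing to closures. Without this inverse construction (or a substitute), the inclusion $\left\{{\bf m}\in M_k(A):\sum_{a\in A} m_a=1\right\}\subseteq\mathcal{C}_{n,k}$ is not established: the ``bijection to be checked'' that you flag at the end is, in the direction this inclusion actually needs, a separate lemma requiring an idea, not routine bookkeeping.
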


Before we prove the theorem, we demonstrate two lemmas that will simplify the proof.
\begin{lem}\label{lem:popTilde}
  Define $\tilde{\pop}(\tau)$ as in Equation~\ref{eq:c}.
  Then $\pop(t)\in \tilde{S}_k$ if and only if
  if $\tilde{\pop}(\tau) \in \tilde{S}_k$.
\end{lem}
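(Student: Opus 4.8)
The plan is to show that the passage $\pop\rightsquigarrow\tilde{\pop}$ is nothing more than a reparametrization of the time axis $\R_{\geq 0}$ by a piecewise‑linear increasing bijection whose ``corners'' sit precisely at the breakpoints, and that the correspondence is symmetric in $\pop$ and $\tilde{\pop}$. Since precomposing with such a map visibly sends a nonnegative step function with at most $k$ breakpoints to a nonnegative step function with at most $k$ breakpoints, the equivalence will follow at once.

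First I would record the identity that makes the correspondence symmetric. For any bounded measurable $\pop$ with $0<\pop<P$, the map $R_{\pop}\colon t\mapsto\int_0^t\pop(x)^{-1}\,dx$ is continuous and strictly increasing, and because $\pop(x)^{-1}>P^{-1}$ it is unbounded, hence an increasing bijection of $\R_{\geq 0}$. Carrying out the change of variables $x=R_{\pop}^{-1}(\sigma)$ (so that $d\sigma=\pop(x)^{-1}\,dx$) in the integral of $\tilde{\pop}$ gives
\[
\int_0^\tau\tilde{\pop}(\sigma)\,d\sigma \;=\; \int_0^\tau\pop\bigl(R_{\pop}^{-1}(\sigma)\bigr)\,d\sigma \;=\; \int_0^{R_{\pop}^{-1}(\tau)}\pop(x)\,\pop(x)^{-1}\,dx \;=\; R_{\pop}^{-1}(\tau).
\]
Writing $P=R_{\pop}$ and $Q(\tau)=\int_0^\tau\tilde{\pop}(\sigma)\,d\sigma$, this says that $P$ and $Q$ are mutually inverse increasing bijections of $\R_{\geq 0}$, with $\tilde{\pop}=\pop\circ P^{-1}$ and $\pop=\tilde{\pop}\circ Q^{-1}$. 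The situation is therefore completely symmetric under the swap $(\pop,P)\leftrightarrow(\tilde{\pop},Q)$, so it suffices to prove the single implication $\pop\in\tilde{S}_k\Rightarrow\tilde{\pop}\in\tilde{S}_k$.

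Next I would carry out that implication explicitly. Suppose $\pop=\sum_{i=1}^{j+1}y_i\,\1_{I_i}$ is a nonnegative step function with $j\le k$ breakpoints $0=t_0<t_1<\dots<t_j<t_{j+1}=\infty$, where $I_i$ is the interval between $t_{i-1}$ and $t_i$ and $y_i>0$. Then $R_{\pop}$ is piecewise linear with slope $y_i^{-1}$ on $I_i$, so its only corners are the $t_i$; setting $\tau_i=R_{\pop}(t_i)$ we get $0=\tau_0<\tau_1<\dots<\tau_j<\tau_{j+1}=\infty$, and $R_{\pop}^{-1}$ maps $(\tau_{i-1},\tau_i)$ bijectively onto the interior of $I_i$. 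Hence $\tilde{\pop}(\tau)=\pop\bigl(R_{\pop}^{-1}(\tau)\bigr)=y_i$ for every $\tau\in(\tau_{i-1},\tau_i)$, i.e.\ $\tilde{\pop}=\sum_{i=1}^{j+1}y_i\,\1_{(\tau_{i-1},\tau_i)}$ is a nonnegative step function with at most $j\le k$ breakpoints, so $\tilde{\pop}\in\tilde{S}_k$. By the symmetry of the previous paragraph, the reverse implication is the identical argument with $\pop,P$ replaced by $\tilde{\pop},Q$ (noting that $Q$ is piecewise linear with slopes the values of $\tilde{\pop}$, with corners at the breakpoints of $\tilde{\pop}$, and that $\pop=\tilde{\pop}\circ Q^{-1}$).

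This is essentially bookkeeping and I do not expect a genuine obstacle; the two points deserving a little care are (i) the change‑of‑variables identity $R_{\pop}^{-1}(\tau)=\int_0^\tau\tilde{\pop}(\sigma)\,d\sigma$, which is the one place the two‑sided bound $0<\pop<P$ is used — it is what guarantees that $R_{\pop}$, and likewise $Q$ when $\tilde{\pop}$ is a step function, is surjective onto all of $\R_{\geq 0}$ — and (ii) the convention on breakpoints: the construction yields a representation of $\tilde{\pop}$ with at most as many breakpoints as $\pop$, and if one insists on counting only genuine discontinuities one simply observes that $\pop$ jumps at $t_i$ exactly when $y_i\ne y_{i+1}$, which is exactly when $\tilde{\pop}$ jumps at $\tau_i$, so the number of breakpoints is in fact preserved exactly.
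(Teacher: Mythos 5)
Your proof is correct and follows essentially the same route as the paper's: both treat $\tilde{\pop}=\pop\circ R_{\pop}^{-1}$ as a precomposition with a monotone bijection of $\R_{\geq 0}$ that carries intervals of constancy to intervals of constancy, and then reverse the roles for the converse. Your explicit identity $R_{\pop}^{-1}(\tau)=\int_0^\tau\tilde{\pop}(\sigma)\,d\sigma$ is a nice way of making the paper's terse ``repeat the argument with $R_{\pop}^{-1}$'' precise (just avoid reusing the letter $P$, which already denotes the upper bound on $\pop$).
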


\begin{proof}
  Let $0 = s_0 < \cdots < s_{k-1} < s_k$ be the sequence of breakpoints of $\pop(t)$.
  The function $R_{\pop}(t)$
  is a monotone increasing function, so the conditions below are equivalent:
  \[ s_j < t \leq s_{j+1} \iff R_{\pop}(s_j) < R_{\pop}(t) \leq R_{\pop}(s_{j+1}). \]
  Since $\pop$ is constant on $(s_j,s_{j+1}]$, the transformed history
    $\tilde{\pop}(\tau) = \pop(R_{\pop}^{-1}(\tau))$ is constant on $(R_{\pop}(s_j),R_{\pop}(s_{j+1})]$.
      This implies that there are still at most $k$ breakpoints.

      For the reverse direction, repeat the argument with $R_{\pop}^{-1}$ in place of $R_{\pop}$.
\end{proof}

\begin{lem}\label{lem:InverseTransform}
  Let $q$ be a strictly positive step function in $\tilde{S}_k$.
  Then, there exists $\pop$ in $\tilde{S}_k$ such that
  $q(\tau) = \pop(R_{\pop}^{-1}(\tau))$ where $R_{\pop}(t) =
  \int_0^t \frac{1}{\pop(x)} \mathrm{d}x$ as above.
\end{lem}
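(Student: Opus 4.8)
\emph{Proof proposal.} The plan is to construct $\pop$ explicitly by reparametrizing the pieces of $q$. Write the data of $q$ as in \eqref{eq:stepFunction}: let $0=\tau_0<\tau_1<\cdots<\tau_m$ be the breakpoints of $q$, so $m\le k$, and let $q_1,\dots,q_{m+1}>0$ be its successive values, so that $q\equiv q_j$ on $(\tau_{j-1},\tau_j]$ for $1\le j\le m$ and $q\equiv q_{m+1}$ on $(\tau_m,\infty)$. As the proof of Lemma~\ref{lem:popTilde} shows, passing from $\pop$ to $\tilde{\pop}$ preserves the ordered list of values and only moves the breakpoints (from $s_j$ to $R_{\pop}(s_j)$). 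So the natural guess is to let $\pop$ take the very same values $q_1,\dots,q_{m+1}$ in the same order, but on intervals whose lengths are chosen so that $R_{\pop}$ carries the new breakpoints exactly to $\tau_1,\dots,\tau_m$.

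Concretely, I would set $s_0=0$ and $s_j=s_{j-1}+q_j(\tau_j-\tau_{j-1})$ for $1\le j\le m$, and define $\pop$ to be the step function equal to $q_j$ on $(s_{j-1},s_j]$ for $1\le j\le m$ and equal to $q_{m+1}$ on $(s_m,\infty)$. Since each $q_j>0$ and each $\tau_j-\tau_{j-1}>0$, the $s_j$ form a strictly increasing sequence, so $\pop$ is a strictly positive step function on $\R_{\geq 0}$ with at most $m\le k$ breakpoints; hence $\pop\in\tilde{S}_k$. Strict positivity of $\pop$ — which is exactly where the hypothesis that $q$ is strictly positive enters — guarantees that $R_{\pop}(t)=\int_0^t \pop(x)^{-1}\,dx$ is finite, continuous, and strictly increasing, and since $\pop$ is eventually constant the integral diverges, so $R_{\pop}$ is a bijection $\R_{\geq 0}\to\R_{\geq 0}$ and $R_{\pop}^{-1}$ is well-defined.

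It then remains to check the identity $q(\tau)=\pop(R_{\pop}^{-1}(\tau))$. A short telescoping computation gives $R_{\pop}(s_j)=\sum_{i=1}^j (s_i-s_{i-1})/q_i=\sum_{i=1}^j(\tau_i-\tau_{i-1})=\tau_j$, so the increasing bijection $R_{\pop}$ maps $(s_{j-1},s_j]$ onto $(\tau_{j-1},\tau_j]$ for $1\le j\le m$ and $(s_m,\infty)$ onto $(\tau_m,\infty)$. Consequently, for $\tau\in(\tau_{j-1},\tau_j]$ we have $R_{\pop}^{-1}(\tau)\in(s_{j-1},s_j]$, whence $\pop(R_{\pop}^{-1}(\tau))=q_j=q(\tau)$, and likewise on $(\tau_m,\infty)$; thus $\tilde{\pop}=q$. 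I do not expect a serious obstacle here: the only points needing care are (i) confirming that strict positivity of $q$ is precisely what makes $R_{\pop}$ a genuine bijection so that the formula $\tilde{\pop}=\pop\circ R_{\pop}^{-1}$ is meaningful, and (ii) the endpoint bookkeeping for the half-open intervals and the final unbounded piece, ensuring the breakpoint count of $\pop$ is genuinely $\le k$ even when some consecutive values $q_j,q_{j+1}$ happen to coincide.
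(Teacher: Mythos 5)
Your proof is correct and, up to presentation, builds the same $\pop$ as the paper: writing $Q(t)=\int_0^t q(x)\,dx$, the paper defines $\pop = q\circ Q^{-1}$, and your breakpoints satisfy $s_j = Q(\tau_j)$, so your piecewise construction is exactly $q\circ Q^{-1}$ made explicit. Where the paper verifies $R_\pop\circ Q = \mathrm{id}$ by a change of variables inside the integral, you verify $R_\pop(s_j)=\tau_j$ by telescoping; both are sound, and neither of the cautions you raise at the end is a genuine obstacle (strict positivity does exactly the work you describe, and if $q_j=q_{j+1}$ then $s_j$ is likewise a spurious breakpoint of $\pop$, so the bound of at most $k$ breakpoints is preserved).
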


\begin{proof}
    Let $Q(t) = \int_0^t q(x) \mathrm{d}x$. We claim the desired function is
    $\pop(t) = q(Q^{-1}(t))$. First, note that because $q$ is strictly positive and takes only 
    finitely many values, it is bounded away from zero. Therefore $Q$ is strictly increasing 
    and takes all values in $[0,\infty)$. Its inverse $Q^{-1}$ therefore exists and is also increasing
    with range $[0,\infty)$. 
    It follows that $\pop$ takes the same values in the same order as $q$. 
    In particular, $\pop\in \tilde{S}_k$. 
    
    To check that $q(t) = \pop(R_\pop^{-1}(t))$,  we first show that 
    $R_\pop(Q(t)) = t$ for all $t\geq 0$.  By definition, 
    \[ R_\pop(Q(t)) \ = \   \int_0^{Q(t)} \frac{1}{\pop(x)} dx \ = \  \int_0^{Q(t)} \frac{1}{q(Q^{-1}(x))} dx \ = \ \int_0^{t} \frac{1}{q(w)} q(w) dw \ =\ t,\]
    where the penultimate equation comes from  substituting $x = Q(w)$ and $dx = q(w) dw$. 
  Since both $Q$ and $R_\pop$ are invertible, we see that $t = Q^{-1}(R_\pop^{-1}(t))$ for all $t$. 
  Applying $q$ to both sides then gives the claim.   \end{proof}

\begin{proof}[Proof of Theorem~\ref{thm:coal}]
  We show that the set of coalescence vectors coming from population
  histories in $\tilde{S}_k$ is equal to the
  set of moments in $M_k(A)$ summing to $1$. The equality of the two closures is then automatic.

  Assume $\pop \in \tilde{S}_k$. From Lemma~\ref{lem:popTilde}, $\tilde{\pop}$ is also in $\tilde{S}_k$.
  Starting with Equation~\ref{eq:c},
  we substitute $u = e^{-\tau}$ to obtain:
  \[ c_i(\pop) =  \int_{0}^{1} \tilde{\pop}^*(u) u^{\binom{i}{2}-1}  \mathrm{d}u, \text{  where  } \tilde{\pop}^*(u) = \pop(R_{\pop}^{-1}(-\ln(u))).\]
  The function $\tilde{\pop}^*$ is piecewise-constant on $[0,1]$ with at most $k$
  breakpoints, so is in $S_k$; therefore, the quantity $c_i$ is the $(\binom{i}{2}-1)$-th
  moment of $\tilde{\pop}^*$. This implies that {\bf c} is in $M_k(A)$ where $ A=\{\binom{i}{2} -1 : i = 2,\ldots,n\}$. 
  Normalizing ${\bf c}$ is equivalent to scaling $\tilde{\pop}^*$ so
  we may assume its sum is already equal to $1$.

  Conversely, up to closure, 
  any moment vector in $M_k(A)$ summing to $1$ comes from some $f \in S_k$.
  Changing our domain to $\mathbb{R}_{\geq 0}$ gives $q(\tau) = f(e^{-\tau})$ in $ \tilde{S}_k$.
  By Lemma~\ref{lem:InverseTransform}, we can produce $\pop \in \tilde{S}_k$ that gives
  transformed population history $q$.
 \end{proof}
 
 \begin{figure}[h!]
\includegraphics[height=1.25in]{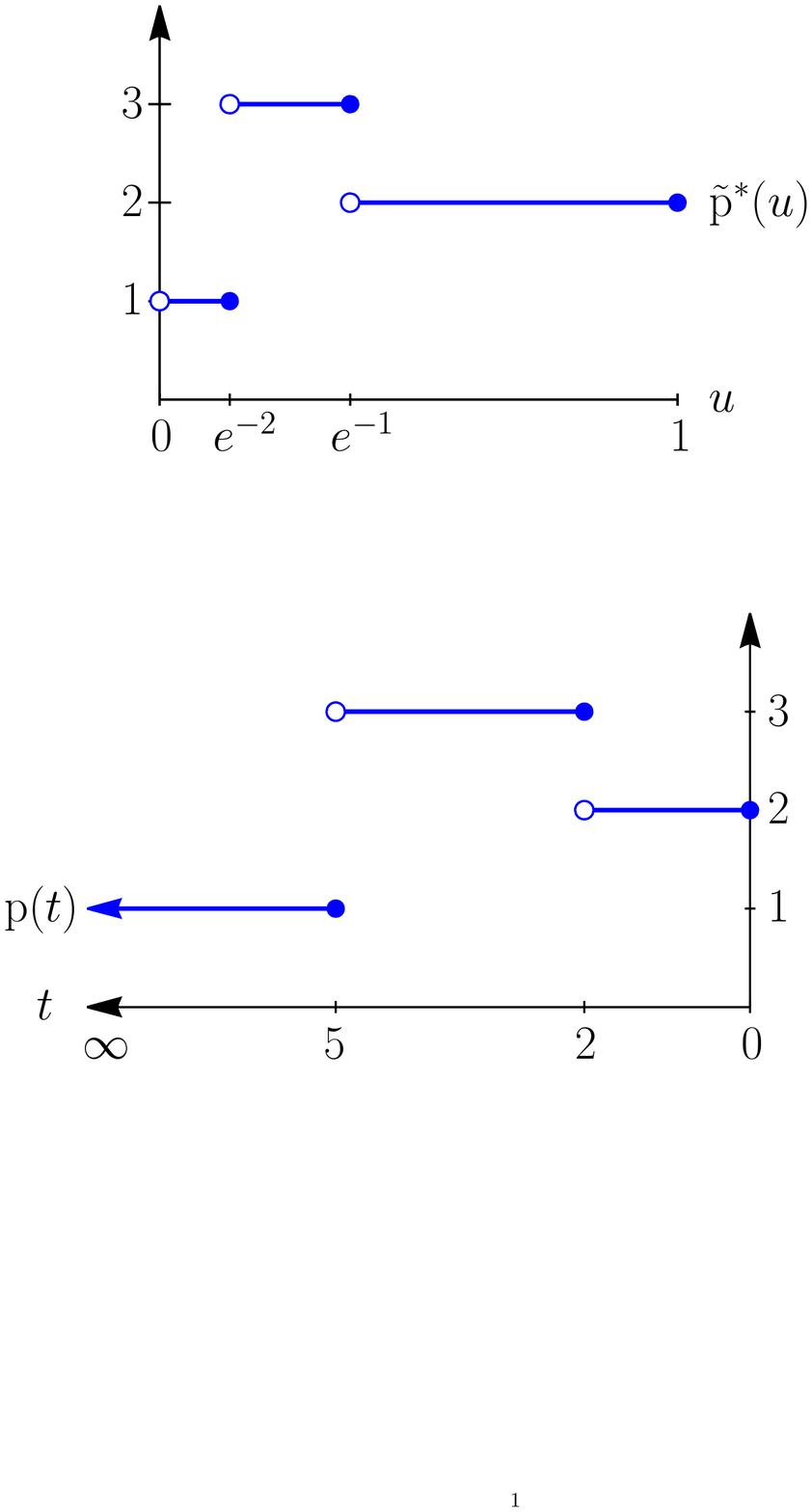} \quad \includegraphics[height=1.25in]{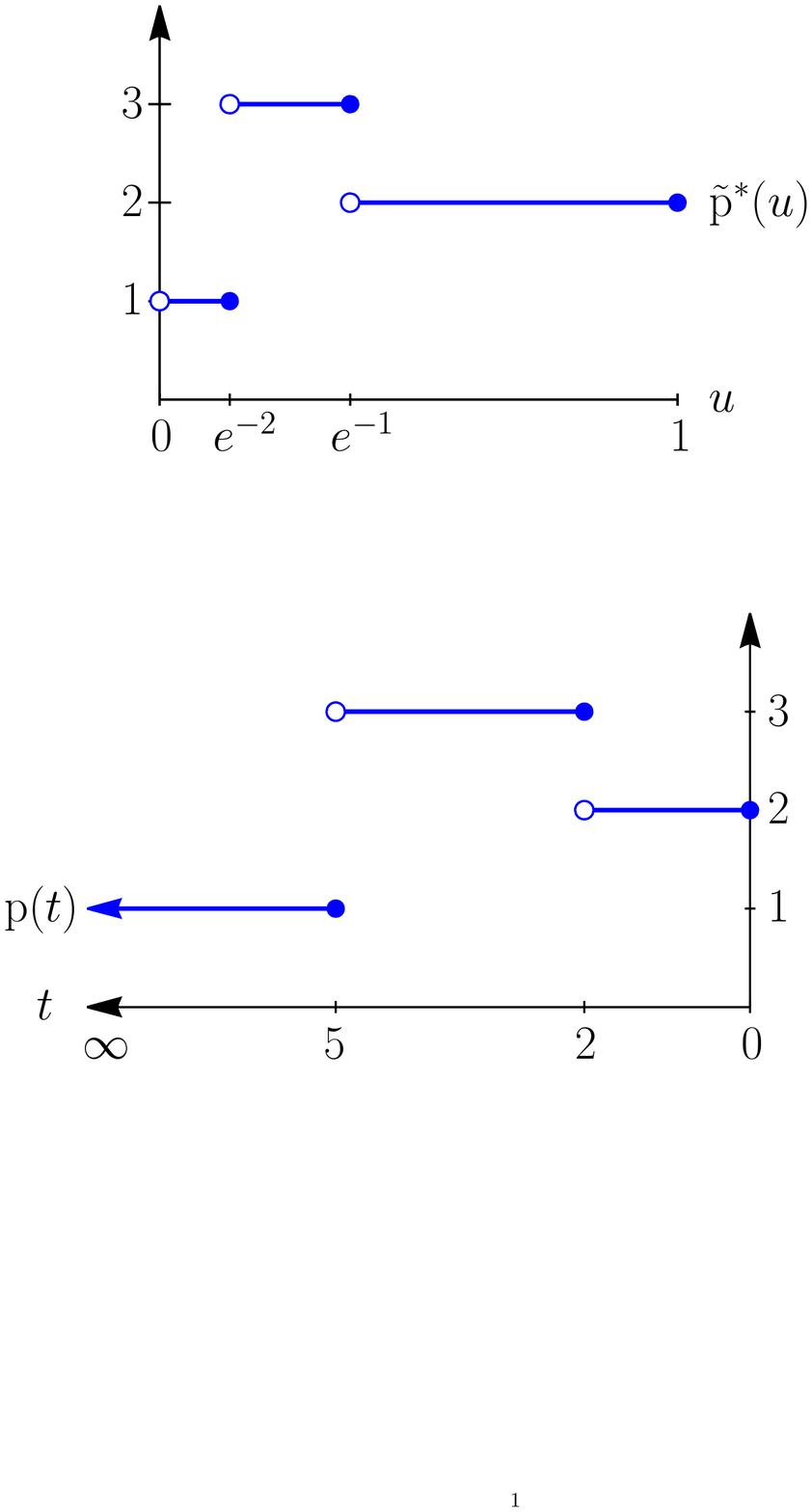}  \includegraphics[height=1.25in]{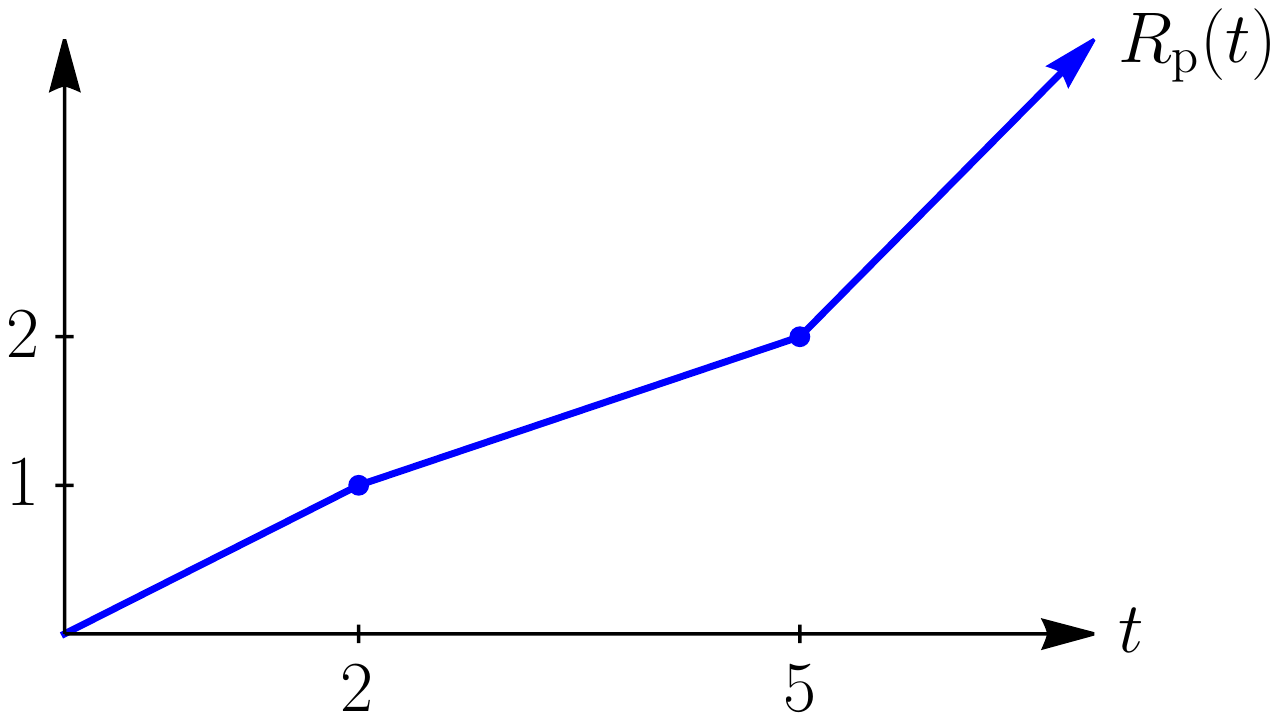}
\caption{The functions $\pop$, $\tilde{\pop}^*$, and $R_{\pop}$ 
from Example~\ref{ex:etas}. } \label{fig:etas}
\end{figure}

 \begin{example}\label{ex:etas}
 Consider the  population function 
 $\pop(t) = p_1\cdot{\bf 1}_{[0,b_1)}+p_2\cdot {\bf 1}_{[b_1,b_2)}+p_3\cdot{\bf 1}_{[b_2,\infty)}$ where $p_1, p_2, p_3,b_1, b_2\in \R_{> 0}$ with $b_1<b_2$. 
The function $R_{\pop}(t)$ is piecewise linear, given by
 \[R_{\pop}(t) = \int_0^t \frac{1}{\pop(x)}dx =
  \frac{t}{p_1}{\bf 1}_{[0,b_1)}+ \left(\frac{t-b_1}{p_2}+\frac{b_1}{p_1}\right){\bf 1}_{[b_1,b_2)}+\left(\frac{t-b_2}{p_3} + \frac{b_2-b_1}{p_2}+\frac{b_1}{p_1}\right){\bf 1}_{[b_2,\infty)}.\] This function is unbounded and strictly increasing with $R_{\pop}(0) = 0$,  so it has an inverse $R_{\pop}^{-1}$ that is also increasing and unbounded on $\R_{\geq 0}$. The function  $\tilde{\pop}(\tau) = \pop(R_{\pop}^{-1}(\tau)) $ is still piecewise constant with two break points $R_{\pop}(b_1)=b_1/p_1$ and $R_{\pop}(b_2)=(b_2-b_1)/p_2+b_1/p_1$, obtained by solving $ R_{\pop}^{-1}(\tau)=b_i$. The $i$th entry of the coalescence vector is then 
  \[c_i = \int_0^{\infty}\tilde{\pop}(\tau) e^{-\binom{i}{2}\tau}d\tau
  =  \int_0^{1} \tilde{\pop}^*(u)u^{\binom{i}{2}-1}du \ \ \text{ where } \ \ \tilde{\pop}^*(u) = \tilde{\pop}(-\ln(u)).\]
The second equality comes from the change of coordinates $u = e^{-\tau}$.
 Note that $\tilde{\pop}^*$ is the step function given by  
 \[
 \tilde{\pop}^*= p_3\cdot{\bf 1}_{(0,s_1]}+ p_2\cdot {\bf 1}_{(s_1,s_2]}+p_1\cdot {\bf 1}_{(s_2,1]}
\ \ \text{ where } \ \ 
s_1 = e^{-R_{\pop}(b_2)}\text{ and }s_2 =  e^{-R_{\pop}(b_1)}.
\]
The graphs of $\pop$ and $\tilde{\pop}^*$ for the values $(p_1, p_2, p_3) = (2,3,1)$ and $(b_1, b_2) = (2,5)$
are shown in Figure~\ref{fig:etas}.  In this case, the break points of  $\tilde{\pop}^*$ are $e^{-R_{\pop}(b_2)}=e^{-2}$ and $e^{-R_{\pop}(b_1)}=e^{-1}$.
 \end{example}

 \begin{remark}
  Note that because $\pop(t)$ denote the population size at time $t$ \emph{before} the present, 
 a population increasing over time corresponds to the function $\pop(t)$ decreasing as a function of $t$, i.e. 
$p_1>p_2>p_3$ in the example above. Note that $\pop(t)$ is decreasing in $t$ if and only if $\tilde{\pop}(\tau)$ is decreasing in $\tau$. The parametrization $u = e^{-\tau}$ reverses direction and so the 
function $\tilde{\pop}^*(u)$ is then increasing as a function of $u$. In these coordinates, $u=0$ corresponds ``infinitely long ago'' ($t=\infty$) and $u=1$ corresponds to the present ($t=0$). 
Therefore coalescence vectors of populations growing over time are moments of increasing step functions on $[0,1]$. 
 \end{remark}

Theorem~\ref{thm:coal} allows us to apply our results from $M_k(A)$ to $\mathcal{C}_{n,k}$.

\begin{cor}\label{cor:coalBound} 
$ \mathcal{C}_{n,n-2} = \mathcal{C}_{n,k}$ for all $k \geq n-2$ and 
$ \mathcal{C}_{n,n-3} \subsetneq \mathcal{C}_{n,n-2}$. 
\end{cor}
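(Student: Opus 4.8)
The plan is to deduce Corollary~\ref{cor:coalBound} directly from Theorem~\ref{thm:coal} together with the Main Theorem (Theorem~\ref{mainthm:StepBound}), which identifies exactly when $M_k(A)$ stabilizes. First I would record the relevant set $A = \{\binom{i}{2}-1 : i = 2,\hdots,n\}$ and compute its size: as $i$ ranges over $2,\hdots,n$ there are $n-1$ distinct values (they are distinct since $\binom{i}{2}$ is strictly increasing in $i$), so $|A| = n-1$. Note also that $0 = \binom{2}{2}-1 \in A$, which is convenient though not strictly needed. By Theorem~\ref{mainthm:StepBound} (equivalently Theorem~\ref{thm:MomentBound1} and Corollary~\ref{cor:MomentBound2}), we have $M_k(A) = M(A)$ if and only if $k \geq |A| - 1 = n-2$, and strict containment $M_{n-3}(A) \subsetneq M_{n-2}(A)$ holds since $n-3 < |A|-1$.

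Next I would transport these facts across the identification in Theorem~\ref{thm:coal}: $\mathcal{C}_{n,k}$ is the slice of $M_k(A)$ by the hyperplane $H = \{\sum_{a\in A} m_a = 1\}$. For the stabilization statement, observe that for $k \geq n-2$ all the cones $M_k(A)$ coincide with $M(A)$, hence their slices by $H$ coincide; this gives $\mathcal{C}_{n,k} = \mathcal{C}_{n,n-2}$ for all $k \geq n-2$. This direction is purely formal once the cones are known to be equal.

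The one point requiring a small argument is the strict containment $\mathcal{C}_{n,n-3} \subsetneq \mathcal{C}_{n,n-2}$: knowing $M_{n-3}(A) \subsetneq M_{n-2}(A)$ does not immediately force the slices to differ, since in principle the extra points of the larger cone could all lie off the hyperplane $H$. To handle this, I would use the fact that both cones are invariant under positive scaling (Remark~\ref{rem:sum}) and that every nonzero point of $M(A)$ has positive first coordinate $m_0 > 0$ (since $0 \in A$ and the constant-function moment argument, as used in the proof of Lemma~\ref{lem:useful}); more to the point, for any nonzero ${\bf m}\in M(A)$ all coordinates are nonnegative and $m_0>0$, so $\sum_{a\in A} m_a > 0$, and we may rescale any nonzero moment vector so that its coordinate sum equals $1$. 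Thus the slice $\mathcal{C}_{n,k} = M_k(A) \cap H$ determines $M_k(A)$ completely as its positive conical hull. Since $M_{n-3}(A) \subsetneq M_{n-2}(A)$, their conical-hull-generating slices must therefore also differ, giving $\mathcal{C}_{n,n-3} \subsetneq \mathcal{C}_{n,n-2}$. The main (mild) obstacle is precisely this last observation — that the normalizing hyperplane $H$ is a genuine transversal to the cones, so that strict inclusion of cones passes to strict inclusion of slices — but it follows readily from positivity of the coordinate sum on $M(A)\setminus\{0\}$.
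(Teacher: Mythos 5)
Your proposal is correct and follows essentially the same route as the paper: translate the stabilization/strictness statements for $M_k(A)$ (Theorem~\ref{thm:MomentBound1} and Corollary~\ref{cor:MomentBound2}) across the identification of Theorem~\ref{thm:coal} by slicing with the hyperplane $\sum_{a\in A} m_a = 1$. You actually spell out a detail the paper leaves implicit — namely that the hyperplane is a genuine transversal (every nonzero ${\bf m}\in M(A)$ has $m_0>0$ and nonnegative coordinates, hence positive coordinate sum), so that strict inclusion of the cones does pass to strict inclusion of the slices — which is a correct and worthwhile clarification but does not change the approach.
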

\begin{proof}
For $A = \left\{\binom{i}{2} : i=2, \hdots, n\right\}$, $|A|$ equals $n-1$. 
By Theorem~\ref{thm:MomentBound1}, $M_{k}(A)= M(A)$ for all $k\geq n-2$. 
In particular, $M_{n-2}(A)= M_k(A)$ for all $k\geq n-2$. 
Intersecting with the hyperplane $\{{\bf m}   :  \sum_{a\in A} m_a = 1\}$ gives that 
$\mathcal{C}_{n, n-2}= \mathcal{C}_{n,k}$ for all $k\geq n-2$. 
By Corollary~\ref{cor:MomentBound2}, $M_k(A)\neq M(A)$ for $k<|A|-1 = n-2$. Hence $M_{n-3}(A) \neq M(A)$. Since $M(A) = M_{n-2}(A)$, 
 intersecting with the hyperplane $\{{\bf m}   :  \sum_{a\in A} m_a = 1\}$ gives that 
$\mathcal{C}_{n, n-2}\neq \mathcal{C}_{n,n-3}$. 
\end{proof}

Affine transformations the sets $\mathcal{C}_{5, 1}$, $\mathcal{C}_{5, 2}$ 
and $\mathcal{C}_{5, 3}$ are show in Figure~\ref{fig:C5ex}.
As promised, $\mathcal{C}_{5, 3}$ is convex and $\mathcal{C}_{5, k}$ is a strict subset for $k<3$.

\section{Connections with semidefinite programming}\label{sec:sdp}

In this section, we describe how to write the moment cone $M(A)$ 
and coalescence manifold $\mathcal{C}_{n,n-2}$ as projections of spectrahedra. 
This gives rise to natural algorithms for testing membership 
and finding nearest points in these sets based on semidefinite programming. 
Formally, a spectrahedron is a set of the form 
$\{{\bf x}\in \R^n : A_0 + \sum_{i=1}^n x_i A_i \succeq 0\}$ where
 $A_0, \hdots, A_n$ are real symmetric matrices and 
 $X\succeq 0$ denotes that the 
matrix $X$ is positive semidefinite. These are the feasible sets of
semidefinite programs. See e.g.~\cite[Ch. 5 and 6]{ConvexAlgBook}.
Python code for computing the nearest point in $\mathcal{C}_{n,n-2}$ to an 
arbitrary point in $\R^{n-1}$ is available at:
\centerline{\texttt{https://github.com/gescholt/DistanceToCoalescenceManifold}}

\begin{thm}[Theorems 10.1 and 10.2 \cite{Sch}]\label{thm:Spectrahedron}
For any $d\in \Z_+$, the cone $M(\{0,1,\hdots, d\})$ is a spectrahedron. If $d = 2e$ is even, then 
\[
M(\{0,1,\hdots, d\}) = \left \{ {\bf m}\in \R^{d+1} : (m_{i+j})_{0\leq i, j \leq e}\succeq 0 \text{ and } 
\left(m_{i+j+1}-m_{i+j+2}\right)_{0\leq i, j \leq e-1}\succeq 0\right\}, 
\]
and if $d = 2e+1$ is odd, then 
\[
M(\{0,1,\hdots, d\}) = \left \{ {\bf m}\in \R^{d+1} :  (m_{i+j+1})_{0\leq i, j \leq e}\succeq 0 \text{ and }  (m_{i+j}-m_{i+j+1})_{0\leq i, j \leq e}\succeq 0\right \}.
\]
\end{thm}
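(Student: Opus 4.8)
The plan is to recognize this as the classical Hamburger/Hausdorff moment theorem in its Hankel-matrix form, and to carry the proof out by combining a Hankel positivity characterization of the full moment cone on a half-line (or all of $\R$) with the standard "localization" trick that encodes the constraint $\mathrm{supp}(\mu)\subseteq [0,1]$ via the auxiliary polynomial $x(1-x)\geq 0$ on $[0,1]$. Concretely, I would first recall that a vector ${\bf m}=(m_0,\dots,m_d)$ arises as $(\int x^a\,d\mu)_a$ for a nonnegative measure $\mu$ on $[0,\infty)$ if and only if the Hankel matrix of the $m_i$ and the shifted Hankel matrix of the $m_{i+1}$ are both positive semidefinite; this is the Stieltjes moment problem in the truncated setting, and it is exactly the structure appearing in the even case (the matrix $(m_{i+j})$ together with $(m_{i+j+1}-m_{i+j+2})$, which is the Hankel form of the shifted sequence built from $\mu$ localized by $1-x$) and in the odd case (the matrix $(m_{i+j+1})$, coming from $x\,d\mu$, together with $(m_{i+j}-m_{i+j+1})$, coming from $(1-x)\,d\mu$). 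The point is that the product $x(1-x)$, nonnegative on $[0,1]$, and the factors $x$ and $1-x$, generate the cone of polynomials of degree $\le d$ nonnegative on $[0,1]$ as sums of squares times these multipliers, with the parity of $d$ dictating which multipliers are needed.

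The key steps, in order: (i) state the SOS representation of nonnegative polynomials on $[0,1]$ — every $p$ of degree $\le d$ with $p\ge 0$ on $[0,1]$ is, when $d=2e$, of the form $\sigma_0 + x(1-x)\sigma_1$ with $\deg\sigma_0\le 2e$, $\deg\sigma_1\le 2e-2$, and when $d=2e+1$, of the form $x\sigma_0 + (1-x)\sigma_1$ with $\sigma_0,\sigma_1$ SOS of degree $\le 2e$; (ii) dualize: $M(\{0,\dots,d\})$ is, by Proposition~\ref{prop:boundaryMoments} and the discussion preceding it, the dual cone of the cone of $A$-supported polynomials nonnegative on $[0,1]$, so a linear functional ${\bf m}$ is in $M(\{0,\dots,d\})$ iff it is nonnegative on all such $p$; (iii) translate "nonnegative on $\sigma_0 + x(1-x)\sigma_1$ for all SOS $\sigma_i$" into the two PSD conditions, using the standard fact that $\langle {\bf m}, q^2\rangle \ge 0$ for all $q$ of degree $\le e$ is equivalent to the Hankel matrix $(m_{i+j})_{0\le i,j\le e}\succeq 0$, and that $\langle {\bf m}, x(1-x)q^2\rangle\ge 0$ for all $q$ of degree $\le e-1$ is equivalent to $(m_{i+j+1}-m_{i+j+2})_{0\le i,j\le e-1}\succeq 0$; (iv) handle the odd case identically with the multipliers $x$ and $1-x$. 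Finally I would note that closedness is automatic — the PSD cone is closed, so the spectrahedral description in particular reproves that $M(\{0,\dots,d\})$ is closed, consistent with the remark in the text that the closure is not needed when $0\in A$.

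The main obstacle is step (i): proving that every univariate polynomial nonnegative on $[0,1]$ of degree $\le d$ admits the claimed \emph{degree-bounded} representation with the correct multipliers for each parity. The existence of \emph{some} such representation follows from factoring $p$ over $\R$ into linear and irreducible quadratic factors and grouping the real roots according to whether they lie below $0$, in $[0,1]$, or above $1$ (roots in $(0,1)$ occur with even multiplicity; a root $\le 0$ contributes a factor $\sim x$, a root $\ge 1$ contributes $\sim (1-x)$); the bookkeeping that the resulting SOS pieces have degree exactly within the stated bounds — and that when $d$ is even one never actually needs an isolated $x$ or isolated $(1-x)$ factor, only their product — is the delicate part and is where a parity argument on degrees is required. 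Since this is precisely the content of \cite[Thm. 10.1, 10.2]{Sch}, which the statement cites, in the paper I would simply invoke that reference rather than reprove it, and restrict the "proof" to recording how the two matrix conditions arise from the dual description so the reader can see the connection with Proposition~\ref{prop:boundaryMoments}.
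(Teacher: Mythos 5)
The paper offers no proof of this theorem at all---it is quoted verbatim from \cite[Thms.~10.1, 10.2]{Sch}---and you likewise propose to defer the one delicate step (the degree-bounded Markov--Luk\'acs representations $\sigma_0+x(1-x)\sigma_1$ for even $d$ and $x\sigma_0+(1-x)\sigma_1$ for odd $d$) to that reference, so your treatment is consistent with the paper's. Your surrounding outline---biduality of the closed cone $M(\{0,\dots,d\})$ against the polynomials nonnegative on $[0,1]$, and the translation of $\langle {\bf m},q^2\rangle\ge 0$ and its localized variants into the stated Hankel positivity conditions, with the degree bookkeeping matching the matrix sizes---is the standard argument and is correct.
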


\begin{cor}\label{cor:M(A)ProjSpec}
For any finite set of integers $A\subset \N$, 
the convex cones $M(A)$, $M^{\uparrow}(A)$ and $M^{\downarrow}(A)$ 
are projections of the spectrahedron $M(\{0,1,\hdots, \max(A)\})$.
\end{cor}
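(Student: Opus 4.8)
The plan is to exhibit each of $M(A)$, $M^{\uparrow}(A)$, $M^{\downarrow}(A)$ as the image of the spectrahedron $M(\{0,1,\hdots,\max(A)\})$ under an explicit linear map. For $M(A)$ itself this is immediate: let $d = \max(A)$ and recall from the discussion preceding Lemma~\ref{lem:moment_shift} that, up to closure, $M(A) = \pi_A\bigl(M(\{0,1,\hdots,d\})\bigr)$, where $\pi_A:\R^{d+1}\to\R^{A}$ is the coordinate projection $\pi_A(m_0,\hdots,m_d) = (m_a)_{a\in A}$. Since $M(\{0,1,\hdots,d\})$ is a spectrahedron by Theorem~\ref{thm:Spectrahedron} and it has nonempty interior (it contains the moment vector of the constant function $\mathbf 1_{[0,1]}$), the image $\pi_A\bigl(M(\{0,1,\hdots,d\})\bigr)$ is already closed, so no extra closure is needed and $M(A)$ is literally the linear image of a spectrahedron, as claimed.

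For the monotone cones the idea is the same but the map is no longer a coordinate projection: I would reparametrize the curves $\gamma_A^{\uparrow}$ and $\gamma_A^{\downarrow}$ of Definition~\ref{def:curve_steps} through the full monomial curve $v_{\{0,\hdots,d\}}(t) = (1,t,\hdots,t^d)$. Concretely, by Lemma~\ref{lem:MonotoneConicalHull}, $M^{\downarrow}(A)$ is the conical hull of $\{\gamma_A^{\downarrow}(t):t\in[0,1]\}$, and the $a$th coordinate of $\gamma_A^{\downarrow}(t)$ is $\tfrac{1}{a+1}t^{\,a-\min(A)}$, an $\R$-linear function of the monomial vector $(t^0,\hdots,t^d)$ — namely the composition of $\pi_B$ (for $B=\{a-\min(A):a\in A\}$) with the diagonal scaling by $\tfrac{1}{a+1}$. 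Hence there is a single linear map $L^{\downarrow}:\R^{d+1}\to\R^{A}$ with $L^{\downarrow}(v_{\{0,\hdots,d\}}(t)) = \gamma_A^{\downarrow}(t)$ for all $t\in[0,1]$. Because $M(\{0,1,\hdots,d\})$ is the conical hull of $\{v_{\{0,\hdots,d\}}(t):t\in[0,1]\}$ (the equality stated after Lemma~\ref{lem:moment_shift}, valid since $0\in\{0,\hdots,d\}$) and $L^{\downarrow}$ is linear, $L^{\downarrow}\bigl(M(\{0,1,\hdots,d\})\bigr)$ is exactly the conical hull of $\{\gamma_A^{\downarrow}(t):t\in[0,1]\}$, which is $M^{\downarrow}(A)$ by Lemma~\ref{lem:MonotoneConicalHull}. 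The increasing case is identical: the $a$th coordinate of $\gamma_A^{\uparrow}(t)$ is $\tfrac{1}{a+1}\sum_{i=0}^{a}t^i$, again $\R$-linear in $(t^0,\hdots,t^d)$, giving a linear map $L^{\uparrow}$ with $L^{\uparrow}(v_{\{0,\hdots,d\}}(t)) = \gamma_A^{\uparrow}(t)$ and hence $L^{\uparrow}\bigl(M(\{0,1,\hdots,d\})\bigr) = M^{\uparrow}(A)$.

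The only point requiring a little care — and the place I expect the main (though still minor) obstacle — is verifying that these linear images are genuinely closed, so that no additional closure operation is hidden. This follows because $M(\{0,1,\hdots,d\})$, being the conical hull of the compact arc $\{v_{\{0,\hdots,d\}}(t):t\in[0,1]\}$ which does not contain the origin, is a closed pointed cone, and the linear images $L^{\uparrow}$, $L^{\downarrow}$ send this arc to the compact arcs $\gamma_A^{\uparrow}([0,1])$, $\gamma_A^{\downarrow}([0,1])$, which by Remark~\ref{rem:MonotoneClosure} also avoid the origin; hence the image cones are closed by the same compactness argument. Assembling these three displays completes the proof.
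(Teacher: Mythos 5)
Your approach is essentially the paper's: express each cone as the linear image of $M(\{0,\ldots,d\})$ via the observation that the relevant curves ($v_A$, $\gamma_A^{\uparrow}$, $\gamma_A^{\downarrow}$) have coordinates that are polynomials of degree $\leq d$ in $t$, hence factor through the monomial curve $v_{\{0,\ldots,d\}}$. The monotone cases are handled correctly, including the closedness check via Remark~\ref{rem:MonotoneClosure} (the image arcs avoid the origin, so their conical hulls are closed).

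There is, however, a gap in your argument for $M(A)$ itself. You claim that $\pi_A\bigl(M(\{0,\ldots,d\})\bigr)$ is closed because the domain is a spectrahedron with nonempty interior. That reasoning is not valid — nonempty interior of the domain says nothing about closedness of a linear image (e.g. the projection of a closed full-dimensional set onto a subspace need not be closed) — and worse, the conclusion is actually false when $0\notin A$. For instance, with $A=\{1,2\}$ one has $(1,0)\notin\pi_A\bigl(M(\{0,1,2\})\bigr)$ even though it lies in the closure, exactly as discussed after Lemma~\ref{lem:moment_shift}. The correct argument, as in the paper, is: when $0\in A$ the image arc $\{v_A(t):t\in[0,1]\}$ has constant first coordinate $1$, so it is a compact set avoiding the origin and its conical hull is closed (the same compactness argument you invoke for the monotone cases); when $0\notin A$, first replace $A$ by $B=\{a-\min(A):a\in A\}$ and use $M(A)=M(B)$ from Lemma~\ref{lem:moment_shift}, then project via $\pi_B$. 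With that repair your proof coincides with the paper's.
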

\begin{proof}
Let $d = \max(A)$. 
Note that by definition, $M(A)$ equals the closure of the projection of $M(\{0,1,\hdots, d\})$ under the map 
$(m_0, m_1, \hdots, m_d) \mapsto (m_a)_{a\in A}$.  For $0\in A$, this projection is closed and 
otherwise, we replace $A$ with $B = \{a-\min(A): a\in A\}$ as in Lemma~\ref{lem:moment_shift}. 
By Theorem~\ref{thm:Spectrahedron}, $M(\{0,1,\hdots, d\})$ 
is a spectrahedron. 

More generally, consider any finite collection of polynomials $p_1, \hdots, p_n \in \R[x]_{\leq d}$.
We claim that the conical hull of the curve 
parameterized by ${\bf p}(t) = (p_1(t), \hdots, p_n(t))$ for $t\in [0,1]$ 
is the image of $M(\{0,1,\hdots, d\})$ under a linear map. 
Specifically, 
consider the linear map $\pi: \R^{d+1}\to \R^n$ taking $(m_0, m_1, \hdots, m_d)$
to $(\sum_{j=0}^d p_{ij}m_j)_{i\in [n]}$ where $p_i(x) = \sum_{j=0}^d p_{ij}x^j$. 
For any $t\in [0,1]$,  ${\bf p}(t)$ equals $\pi(v_d(t))$ where $v_d(t) = (1,t,t^2,\hdots, t^d)$. 
Since $M(\{0,1,\hdots, d\})$ is the conical hull of $\{v_d(t): t\in [0,1]\}$, 
the conical hull of $\{{\bf p}(t): t\in [0,1]\}$ is the image of $M(\{0,1,\hdots, d\})$ under $\pi$. 

Note that the coordinates of both $\gamma_A^{\uparrow}(t)$ and $\gamma_A^{\downarrow}(t)$ 
are given by polynomials in $t$ of degree $\leq d$. 
Then by Lemma~\ref{lem:MonotoneConicalHull} and the arguments above, 
both $M^{\uparrow}(A)$ and $M^{\downarrow}(A)$ can be written as the image of 
$M(\{0,1,\hdots, d\})$ under a linear map. 
\end{proof}

\begin{example}\label{ex:M(A)Spec}
For $A = \{0,2,5,9\}$, we write $M(A)$, $M^{\uparrow}(A)$ and $M^{\downarrow}(A)$ 
are projections of the spectrahedron $M(\{0,1,\hdots,9\})$.
By Theorem~\ref{thm:Spectrahedron}, this is given by the set 
of ${\bf m} = (m_0, \hdots, m_9)$ in $ \R^{10}$ for which the matrices 
\[
\begin{pmatrix}
m_1 & m_2 & m_3 & m_4 & m_5 \\
m_2 & m_3 & m_4 & m_5 & m_6\\
m_3 & m_4 & m_5 & m_6 & m_7\\
m_4 & m_5 & m_6 & m_7 & m_8\\
m_5 & m_6 & m_7 & m_8 & m_9\\
\end{pmatrix} \text{ and } 
\begin{pmatrix}
m_0-m_1 & m_1-m_2 & m_2-m_3 & m_3-m_4 & m_4-m_5  \\
m_1-m_2 & m_2-m_3 & m_3-m_4 & m_4-m_5 &m_5-m_6\\
m_2 - m_3 & m_3-m_4 & m_4-m_5 & m_5-m_6 &m_6-m_7  \\
m_3-m_4 & m_4-m_5 & m_5-m_6 & m_6-m_7 & m_7-m_8\\
m_4-m_5 & m_5-m_6 & m_6-m_7 & m_7-m_8 & m_8-m_9\\
\end{pmatrix}\]
are positive semidefinite. 
We obtain $M(A)$ as the image of this cone under the linear map 
${\bf m}\mapsto (m_0, m_2, m_5, m_9)$. 
Similarly, the cones $M^{\uparrow}(A)$ and $M^{\downarrow}(A)$ are the images of 
$M(\{0,1,\hdots,9\})$ under the (respective) maps
\[
{\bf m}\mapsto \left(m_0,\ \frac{m_0+m_1+m_2}{3}, \ \dfrac{1}{6}\displaystyle\sum_{i=0}^5m_i,\ \dfrac{1}{10}\sum_{i=0}^9 m_i \right)
\ \ \text{ and } \ \ 
{\bf m}\mapsto \left(m_0, \frac{m_2}{3}, \frac{m_5}{6}, \frac{m_9}{10}\right). 
\]
\end{example}

\begin{cor}\label{cor:M(A)test}
Testing membership any of the cones $M(A)$, $M^{\uparrow}(A)$ or $M^{\downarrow}(A)$
is equivalent to testing the feasibility of 
a semidefinite program in $\leq d+1$ variables with two matrix constraints, each of size $\leq d/2+1$, 
where $d = \max(A)$. 
\end{cor}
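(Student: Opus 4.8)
The plan is to deduce this directly from Corollary~\ref{cor:M(A)ProjSpec} together with the explicit spectrahedral description in Theorem~\ref{thm:Spectrahedron}, using the standard fact that membership in a linear image of a spectrahedron is an SDP feasibility question. First I would record that, by Corollary~\ref{cor:M(A)ProjSpec}, each of $M(A)$, $M^{\uparrow}(A)$, $M^{\downarrow}(A)$ is the image of the spectrahedron $M(\{0,1,\hdots,d\})$ under an explicit linear map $\pi:\R^{d+1}\to\R^{|A|}$ (for $M(A)$ this is the coordinate projection $\mathbf{m}\mapsto(m_a)_{a\in A}$, and for the monotone cones it is the polynomial-coefficient map exhibited in the proof of that corollary). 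Consequently a vector $\mathbf{v}\in\R^{|A|}$ lies in the relevant cone if and only if the system ``$\mathbf{m}\in M(\{0,1,\hdots,d\})$ and $\pi(\mathbf{m})=\mathbf{v}$'' is feasible in the variables $\mathbf{m}=(m_0,\hdots,m_d)$, i.e.\ $d+1$ variables, subject to the $|A|$ affine equalities $\pi(\mathbf{m})=\mathbf{v}$ and the two linear matrix inequalities coming from Theorem~\ref{thm:Spectrahedron}. (Equivalently, one may eliminate the $|A|$ equalities to obtain a feasibility SDP in $d+1-|A|\le d+1$ free variables.)

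Next I would check the matrix sizes. Writing $d=2e$ or $d=2e+1$, Theorem~\ref{thm:Spectrahedron} presents $M(\{0,1,\hdots,d\})$ with two Hankel-type matrix constraints of sizes $e+1$ and $e$ in the even case and $e+1$ and $e+1$ in the odd case; in both cases each size is at most $\lfloor d/2\rfloor+1\le d/2+1$. This yields the claimed ``two matrix constraints, each of size $\le d/2+1$.''

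There is one small point to dispatch: when $0\notin A$, the projection defining $M(A)$ a priori requires a closure. Here I would invoke Lemma~\ref{lem:moment_shift} to replace $A$ by $B=\{a-\min(A):a\in A\}$, for which $0\in B$ and the coordinate projection of $M(\{0,1,\hdots,\max(B)\})$ onto the $B$-coordinates is already closed; since $\max(B)=d-\min(A)\le d$, the variable count and matrix-size bounds are unaffected (in fact only improved). The monotone cones never need this, since their curve parametrizations $\gamma^{\uparrow}_A,\gamma^{\downarrow}_A$ are compact and bounded away from the origin, so their conical hulls are already closed by Remark~\ref{rem:MonotoneClosure}.

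I do not expect a genuine obstacle: the content here is bookkeeping on top of Corollary~\ref{cor:M(A)ProjSpec} and Theorem~\ref{thm:Spectrahedron}, and the only places to be careful are the closure issue just handled and keeping straight the distinction between ambient variables and affine constraints when counting ``$\le d+1$ variables.''
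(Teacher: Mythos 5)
Your proof is correct and follows the route the paper implicitly intends: the corollary is stated without a separate proof precisely because it falls out of Corollary~\ref{cor:M(A)ProjSpec} and Theorem~\ref{thm:Spectrahedron} by the bookkeeping you carry out. Your variable count ($m_0,\hdots,m_d$, so $d+1$ variables) and your matrix-size check (sizes $e+1$ and $e$ when $d=2e$, sizes $e+1$ and $e+1$ when $d=2e+1$, both $\le \lfloor d/2\rfloor+1\le d/2+1$) are accurate, and the remark about handling $0\notin A$ via Lemma~\ref{lem:moment_shift} and the compactness observation in Remark~\ref{rem:MonotoneClosure} for the monotone cones correctly addresses the only subtlety.
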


\begin{cor}
For $k\geq n-2$, the coalescence manifold  $\mathcal{C}_{n,k}$ 
is the projection of a spectrahedron. Testing membership in  $\mathcal{C}_{n,k}$ 
is equivalent to testing the feasibility of a semidefinite program in $\leq n^2/2$ variables with two matrix constraints, each of size $\leq n^2/4$. 
\end{cor}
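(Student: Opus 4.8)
The plan is to assemble this statement directly from the results already in hand. First I would record, via Theorem~\ref{thm:coal} and Corollary~\ref{cor:coalBound}, that for $k\ge n-2$ one has
\[
\mathcal{C}_{n,k}=\mathcal{C}_{n,n-2}=\Bigl\{{\bf m}\in M(A):\textstyle\sum_{a\in A}m_a=1\Bigr\},
\qquad A=\{\tbinom{i}{2}-1:i=2,\hdots,n\},
\]
using that $M_{n-2}(A)=M(A)$ by Theorem~\ref{thm:MomentBound1}, since $|A|=n-1$. Note $\min(A)=0$, so $0\in A$ and no closure subtleties intervene: $\mathcal{C}_{n,k}$ is exactly the slice of the cone $M(A)$ by the fixed affine hyperplane $H=\{\sum_{a\in A}m_a=1\}$.

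Next I would push the spectrahedral description of $M(A)$ through this slice. By Corollary~\ref{cor:M(A)ProjSpec}, $M(A)=\pi(\mathcal{S})$ where $\mathcal{S}=M(\{0,1,\hdots,d\})$ is the spectrahedron of Theorem~\ref{thm:Spectrahedron}, $d=\max(A)=\binom{n}{2}-1$, and $\pi\colon\R^{d+1}\to\R^{n-1}$ is the coordinate projection onto the entries indexed by $A$. Then $\mathcal{C}_{n,k}=\pi(\mathcal{S})\cap H=\pi\bigl(\mathcal{S}\cap\pi^{-1}(H)\bigr)$, and $\mathcal{S}\cap\pi^{-1}(H)$ is again a spectrahedron, because intersecting a spectrahedron with an affine subspace preserves the property (substitute the single affine equation cutting out $\pi^{-1}(H)$ into the linear matrix pencil). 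Hence $\mathcal{C}_{n,k}$ is a projection of a spectrahedron.

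For the complexity claim I would track sizes through the same chain. Testing ${\bf m}\in\mathcal{C}_{n,k}$ amounts to deciding feasibility of the spectrahedron $M(\{0,1,\hdots,d\})$ subject to the affine constraints prescribing its $A$-coordinates to equal ${\bf m}$, together with the one extra equation $\sum_{a\in A}m_a=1$; by Theorem~\ref{thm:Spectrahedron} this is a semidefinite program in $d+1$ variables with two positive-semidefiniteness constraints on matrices of size $\lfloor d/2\rfloor+1$, and adjoining a single affine equality changes neither the number nor the sizes of these matrix blocks (exactly as in Corollary~\ref{cor:M(A)test}). Substituting $d=\binom{n}{2}-1$ then gives $d+1=\binom{n}{2}=\tfrac{n(n-1)}{2}\le\tfrac{n^2}{2}$ variables and matrix size $\le\lfloor d/2\rfloor+1\le\tfrac{d+2}{2}=\tfrac{n^2-n+2}{4}\le\tfrac{n^2}{4}$ for $n\ge2$, which is the asserted bound.

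This corollary is essentially formal given the preceding development, so I do not anticipate a genuine obstacle; the only points needing a moment of care are verifying that absorbing the normalization $\sum_{a\in A}m_a=1$ into the semidefinite representation does not enlarge the matrix data, and the elementary binomial-coefficient arithmetic that produces the clean thresholds $n^2/2$ and $n^2/4$.
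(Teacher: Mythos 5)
Your proposal is correct and follows essentially the same route as the paper: slice $M(A)$ by the hyperplane $\sum_{a\in A} m_a=1$ via Theorem~\ref{thm:coal} and Corollary~\ref{cor:coalBound}, pull the slice back to the spectrahedron $M(\{0,1,\hdots,d\})$ with $d=\binom{n}{2}-1$ using Corollary~\ref{cor:M(A)ProjSpec}, note that intersecting a spectrahedron with an affine subspace is again a spectrahedron, and count $d+1\le n^2/2$ variables and matrix sizes $\le d/2+1\le n^2/4$. Your extra remarks (that $0\in A$ so the projection is closed, and the explicit check that the size bound holds for $n\ge 2$) are fine refinements of the same argument.
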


\begin{proof} 
By Theorem~\ref{thm:coal} and Corollary~\ref{cor:coalBound}, for all $k\geq n-2$, 
coalescence manifold  $\mathcal{C}_{n,k}$ equals in the intersection of 
$M(A)$ with the affine hyperplane given by $\sum_{a\in A}m_a =1$ 
where $A = \{\binom{i}{2}-1: i=2,\hdots, n\}$. 
By Corollary~\ref{cor:M(A)ProjSpec}, $M(A)$ is the projection of 
$M(\{0,1,\hdots, d\})$ where $d = \binom{n}{2}-1$. 
It follows that $\mathcal{C}_{n,k}$ is the projection of the points in $M(\{0,1,\hdots, d\})$ satisfying the 
affine linear equation  $\sum_{a\in A}m_a =1$. 
The intersection of a spectrahedron with an affine linear space is again a spectrahedron and 
so $\mathcal{C}_{n,k}$ is the projection of a spectrahedron. 

The spectrahedron $M(\{0,1,\hdots, d\})$ is defined by two linear matrix inequalities of 
size $\leq d/2+1 \leq n^2/4$. There are at most $d+1=\binom{n}{2}\leq n^2/2$ variables.
\end{proof}

Similarly, given a point ${\bf p}\in \R^{n-1}$, we can use a semidefinite program to find the nearest point in 
$\mathcal{C}_{n,k}$ for sufficiently large $k$.  
This comes from the description of $\mathcal{C}_{n,k}$ above and the fact that distance minimization 
can be phrased as a semidefinite program (see, e.g. \cite{CVXbook}). 
Specifically,  
given $\x\in \R^{n-1}$, the matrix $n\times n$ matrix 
$\begin{pmatrix} \lambda & \x -\p \\ (\x-\p)^T  & {\rm Id}_{n-1} \end{pmatrix}$ is positive semidefinite 
if and only if $||\x -\p||^2_2 \leq \lambda$, where ${\rm Id}_{n-1} $ denotes the $(n-1)\times (n-1)$ identity matrix. 
Given a set $S \subset \R^{n-1}$, suppose that $\lambda^*$ and $\x^*$ obtain the minimum 
\[
\min_{\lambda\in \R, \x\in S}  \lambda  \ \ \  \text{ such that } \ \ \ \begin{pmatrix} \lambda & \x -\p \\ (\x-\p)^T  & {\rm Id}_{n-1} \end{pmatrix} \succeq 0. 
\]
Then $\x^*$ is (one of) the nearest points in $S$ to $\p$ and the distance $||\x^* - \p||_2$ is $ \sqrt{\lambda^*}$. 
In particular, if the set $S$ is the projection of a spectrahedron, then this minimization problem is a semidefinite program. 

\begin{cor} Given $\p\in \R^{n-1}$, the problem of finding the closest point to $\p$ in 
$\mathcal{C}_{n,k}$ for sufficiently large $k$ is equivalent to solving 
a semidefinite program in $\leq n^2/2$ variables with three matrices of size $\leq n^2/4$. 
\end{cor}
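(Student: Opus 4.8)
The plan is to assemble the spectrahedral description of $\mathcal{C}_{n,k}$ from the previous corollary together with the semidefinite reformulation of nearest-point distance recalled above. Fix any $k\geq n-2$ and set $A=\{\binom{i}{2}-1:i=2,\dots,n\}$ and $d=\max(A)=\binom{n}{2}-1$, and let $\pi:\R^{d+1}\to\R^{n-1}$ be the coordinate projection $\pi(\mathbf{m})=(m_a)_{a\in A}$. Note that $0\in A$ (taking $i=2$), so by Corollary~\ref{cor:M(A)ProjSpec} the equality $M(A)=\pi(M(\{0,1,\dots,d\}))$ is exact, with no closure needed. Since the affine equation $\sum_{a\in A}m_a=1$ involves only coordinates retained by $\pi$, combining Theorem~\ref{thm:coal} and Corollary~\ref{cor:coalBound} gives $\mathcal{C}_{n,k}=\pi(\mathcal{S})$ where
\[
\mathcal{S} \ = \ M(\{0,1,\dots,d\})\ \cap\ \Bigl\{\mathbf{m}\in\R^{d+1}:\textstyle\sum_{a\in A}m_a=1\Bigr\}.
\]
By Theorem~\ref{thm:Spectrahedron}, $M(\{0,1,\dots,d\})$ is a spectrahedron, and intersecting with an affine subspace preserves this, so $\mathcal{S}$ is a spectrahedron cut out by the two Hankel-type linear matrix inequalities of Theorem~\ref{thm:Spectrahedron} together with one affine equation.

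Next I would write down the distance program. Given $\p\in\R^{n-1}$, consider
\[
\min_{\lambda\in\R,\ \mathbf{m}\in\mathcal{S}}\ \lambda
\qquad\text{subject to}\qquad
\begin{pmatrix}\lambda & \pi(\mathbf{m})-\p\\ (\pi(\mathbf{m})-\p)^T & {\rm Id}_{n-1}\end{pmatrix}\succeq 0.
\]
The displayed matrix is positive semidefinite exactly when $\|\pi(\mathbf{m})-\p\|_2^2\leq\lambda$, and $\pi$ maps $\mathcal{S}$ onto $\mathcal{C}_{n,k}$, so the optimal value of this program equals $\min_{\mathbf{x}\in\mathcal{C}_{n,k}}\|\mathbf{x}-\p\|_2^2$ and any optimal pair $(\lambda^*,\mathbf{m}^*)$ produces a nearest point $\mathbf{x}^*=\pi(\mathbf{m}^*)$ to $\p$ in $\mathcal{C}_{n,k}$ with $\|\mathbf{x}^*-\p\|_2=\sqrt{\lambda^*}$. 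The minimum is attained because $\mathcal{C}_{n,k}$ is compact --- closed by definition and bounded since it lies in the simplex $\Delta^{n-1}$ --- and the program is feasible with $\lambda$ bounded below by $0$; its feasible region is cut out by linear matrix inequalities and one affine equation and the objective is linear, so it is a genuine semidefinite program.

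Finally I would do the size bookkeeping. The decision variables are $m_0,\dots,m_d$ together with $\lambda$, a total of $d+2=\binom{n}{2}+1\leq n^2/2$ for $n\geq 2$. There are three linear matrix inequalities: the two Hankel-type conditions defining $M(\{0,1,\dots,d\})$, each of size $\leq\lfloor d/2\rfloor+1\leq n^2/4$, and the $n\times n$ Schur-complement matrix above, of size $n\leq n^2/4$ for $n\geq 4$ (the finitely many small cases being handled directly); the single affine equation $\sum_{a\in A}m_a=1$ does not affect these counts. All of these estimates are elementary once one substitutes $d=\binom{n}{2}-1$, so I do not anticipate a serious obstacle. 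The only steps that require a little care are verifying that the infimum is attained --- which rests on compactness of $\mathcal{C}_{n,k}$ --- and checking that optimizing over the lift $\mathbf{m}$ and then projecting by $\pi$ yields the same optimum as optimizing directly over $\mathcal{C}_{n,k}$, which is immediate from $\pi(\mathcal{S})=\mathcal{C}_{n,k}$.
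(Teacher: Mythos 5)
Your proposal is correct and follows essentially the same route the paper takes: combine the projected-spectrahedral description of $\mathcal{C}_{n,k}$ from the preceding corollary with the Schur-complement encoding of the squared-distance epigraph, then count variables and block sizes. The only points you add beyond what the paper makes explicit --- that the minimum is attained because $\mathcal{C}_{n,k}$ is compact, and that the nominal bound $n\leq n^2/4$ on the Schur block fails for $n\in\{2,3\}$ --- are sensible observations, but the paper treats these bounds as asymptotic bookkeeping and your argument matches its intent.
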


\begin{example} 
For $n=5$ and $k\geq 3$, $\mathcal{C}_{5,k}$ equals the set of points in $M(\{0,2,5,9\})$ 
with $m_0+m_2+m_5+m_9 = 1$.  Projecting from $M(\{0,1,\hdots, 9\})$, we see that 
\begin{align*}
\mathcal{C}_{5,k}= 
\biggl\{
&(m_0,m_2,m_5,m_9)\in \R^4 \ : \  m_0+m_2+m_5+m_9 = 1\\ & \text{ and }  \exists (m_1,m_3,m_4,m_6,m_7,m_8)\in\R^6  \text{ such that } 
(m_j)_{j=0,\hdots, 9} \in M(\{0,1,\hdots, 9\})
\biggl\}
\end{align*}
Let $\mathcal{A}({\bf m})$ and $\mathcal{B}({\bf m})$ denote the 
two $5\times 5$ matrices appearing in Example~\ref{ex:M(A)Spec}. 
Then $M(\{0,1,\hdots, 9\})$ is the set of points ${\bf m}\in \R^{10}$ for which 
$\mathcal{A}({\bf m})\succeq 0$ and $\mathcal{B}({\bf m})\succeq 0$.  
Given a point $\p = (a,b,c,d)\in \R^4$, we can find the closest point in $\mathcal{C}_{5,k}$
by solving the following semidefinite program with $10$ parameters and three $5\times 5$ linear matrix constraints: 
\begin{align*}
\min_{\lambda, m_0, \hdots, m_9} \lambda \   \text{ such } &\text{that }  \ m_0+m_2+m_5+m_9 = 1, \  \mathcal{A}({\bf m})\succeq 0, \ \mathcal{B}({\bf m})\succeq 0, \\
& \text{ and } \begin{pmatrix}
\lambda & m_0-a & m_2-b & m_5-c & m_9-d \\
m_0-a  & 1 & 0 & 0 & 0\\
m_2-b & 0 & 1 & 0 & 0 \\
m_5-c & 0 & 0 & 1 & 0 \\
m_9-d & 0 & 0 & 0 & 1
\end{pmatrix} 
\succeq 0.
\end{align*}
If $(\lambda^*, {\bf m}^*)$ denotes the points achieving this minimum, then 
$(m_0^*, m_2^*, m_5^*, m_9^*)$ is the closest point in $\mathcal{C}_{5,k}$ to ${\bf p}$ 
with distance $\sqrt{\lambda^*}$. 
\end{example}

\section{Discussion and open questions}\label{sec:questions}

One takeaway from Section~\ref{sec:momentIntro} is that the points on the boundary 
of $\mathcal{C}_{n,k}$ for $k\geq n-2$ correspond to moment vectors of point 
evaluations on $[0,1]$.  However these do not correspond to biologically meaningful population functions! 
Similarly, a point in the interior of $\mathcal{C}_{n,k}$ can come from several different population functions, some of which are more biologically plausible than others. 
One natural question from this standpoint is how to pick the right population history from the fiber of a coalescence vector.  
\begin{question}
Given a point ${\bf m}$ in the interior of $M_k(A)$, how can we find the ``best'' step function 
$f\in S_k$ with moment vector ${\bf m}$? 
\end{question}

Here there is some natural flexibility in the notion of ``best''. Ideally it should be biologically plausible and also easy to compute.  For plausibility, it might be reasonable to try to bound 
or minimize the ratios $y_{i+1}/y_{i}$ of consecutive population sizes. 
One step towards this would be to understand the structure of the fibers of the 
moment map $\mu_A$.  

\begin{question}
Are all fibers of the map $\mu_A:P \rightarrow \R^A$ given in equation \eqref{eq:muA} connected? 
\end{question}

For $k=2$ and $A = \{0,2,5\}$, the $(s_1,s_2)$-coordinates of the fibers of some points in $M_2(A)$
are shown below.

\begin{figure}[h!]
  \includegraphics[height=4in]{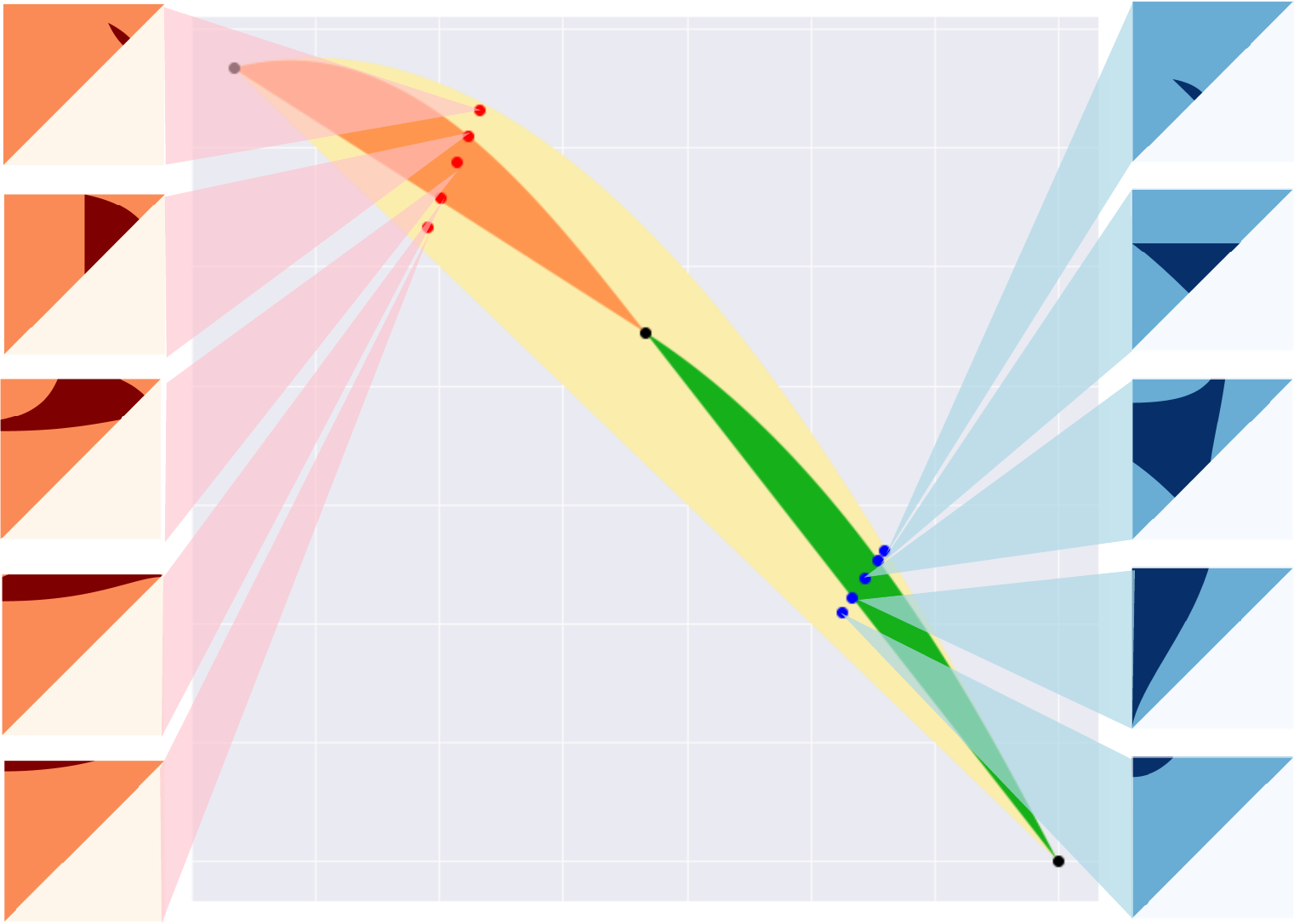}
  \caption{The central image depicts $M_2(A)$ in yellow. The orange region is $M^{\uparrow}(A)$ and the
    green region $M^{\downarrow}(A)$; their union is $M_1(A)$. The triangle above each point depicts the
    fiber as a subset of the $(s_1,s_2)$-simplex.}
  \end{figure}

To understand the fibers, it may also help to relate the combinatorial structure of the 
polytope $P$ (which is a product of two $k$-dimensional simplices) to the 
semi-algebraic and combinatorial structure of $M_k(A)$. For example, the boundary 
of $M_2(\{0,2,5,9\})$, seen in Figure~\ref{fig:C5ex}, comes from some of the two-dimensional faces of the four-dimensional polytope $P$. 

\begin{question} How does the facial structure of $P$ relate to the algebraic boundary of $M_k(A)$? \end{question}

Finally, Section~\ref{sec:sdp} gives an algorithm for testing membership in $M(A)$, 
which coincides with $M_k(A)$ for $k\geq |A|-1$.  It would be desirable to be able 
to test membership for smaller $k$ as well. 

\begin{question}
Is there an effective method to test membership in $M_k(A)$ for $k< |A|-1$? 
\end{question}

These sets are not convex and may have 
complicated semialgebraic structure (Figure~\ref{fig:C5ex}). 

\bibliographystyle{plain}


\end{document}